\numberwithin{equation}{section}	
\theoremstyle{definition}
	\newtheorem{Def}{Definition}[section]
	\newtheorem{Rem}[Def]{Remark}
\theoremstyle{theorem}
	\newtheorem{Thm}[Def]{Theorem}
	\newtheorem{Prop}[Def]{Proposition}
\newcommand{\R}{\mathbb{R}}
\newcommand{\C}{\mathbb{C}}
\newcommand{\Z}{\mathbb{Z}}
\renewcommand{\L}{\mathbb{L}}
\newcommand{\Rs}{\overline{\mathbb{C}}}
\renewcommand{\Re}{\operatorname{Re}}
\newcommand{\3}{{\rm{I\hspace{-.1em}I\hspace{-.1em}I}}}
\newcommand{\bash}{\backslash}
\newcommand{\ii}[2]{\langle {#1}, {#2}  \rangle}
\renewcommand{\O}{\mathcal{O}}
\title{Higher genus nonorientable maximal surfaces in the Lorentz-Minkowski 3-space}
\author
 {Shoichi Fujimori \thanks{The first author was supported in part by Grant-in-Aid for Scientific Research (C) No. 21K03226 from Japan Society for the Promotion of Science.
  \newline \indent 2020 {\em Mathematics Subject Classification.} 
                   Primary 53A10; Secondary 53C42, 53C50.
  \newline \indent {\em Key words and phrases.} 
                   maximal surface, nonorientable surface, higher genus surface.
 }
 \and 
  Shin Kaneda
 }
\date{September 9, 2021}
\begin{document}
\maketitle

\begin{abstract}
We study nonorientable maximal surfaces in Lorentz-Minkowski 3-space.
We prove some existence results for surfaces of this kind with high genus and one end.
\end{abstract}
\section*{Introduction}
The first example of a complete nonorientable minimal surface in Euclidean 3-space $\R^3$ was found by W. H. Meeks $\3$ in 1981.
He constructed a minimal M\"{o}bius strip \cite{M}(See Figure \ref{fig:nonorimin}:Left). 
Then, in 1993, F. J. L\'{o}pez constructed a minimal once-punctured Klein bottle \cite{L}(See Figure \ref{fig:nonorimin}:Right). 

\begin{figure}[htbp]
\centering
	\begin{tabular}{cc}	
		\begin{minipage}{0.45\hsize}
			\centering
			\includegraphics[width=3.7cm]{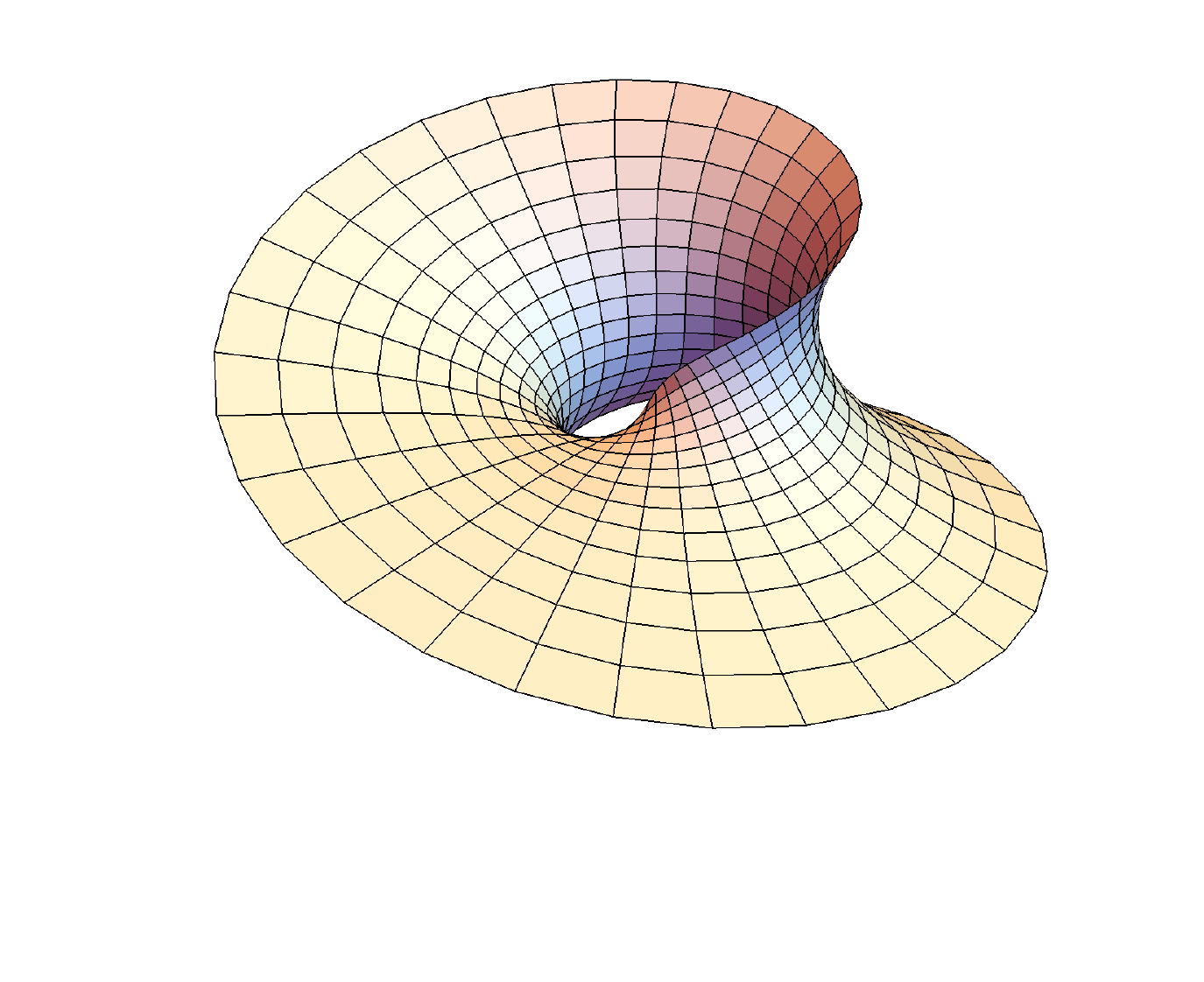}
		\end{minipage}
		
		\begin{minipage}{0.45\hsize}
			\centering
			\includegraphics[width=3.5cm]{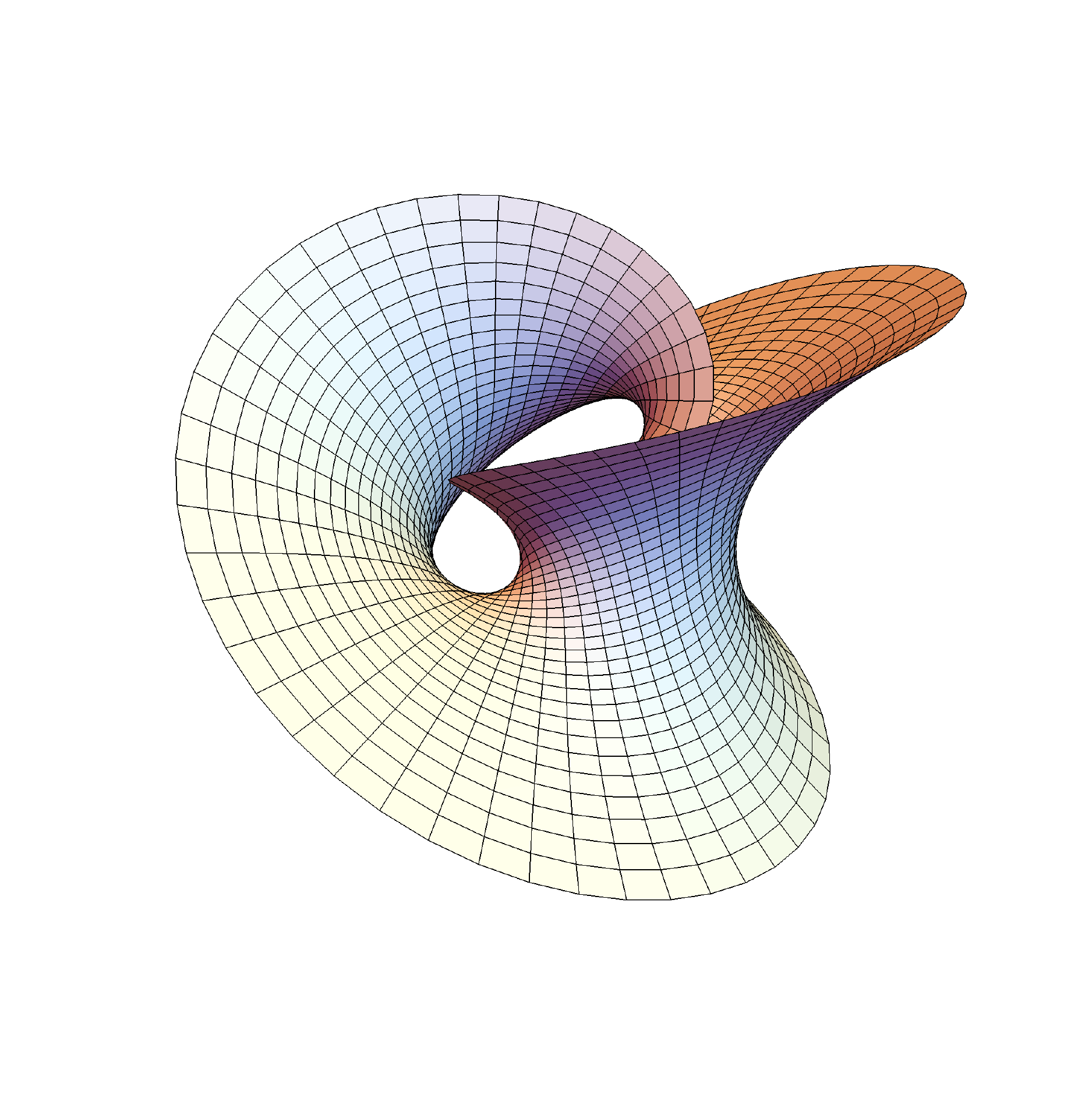}
		\end{minipage}
		\\
		
		\begin{minipage}{0.5\hsize}
			\hspace{30pt}
			\vspace{10pt}
			\includegraphics[width=1.6cm, angle=270]{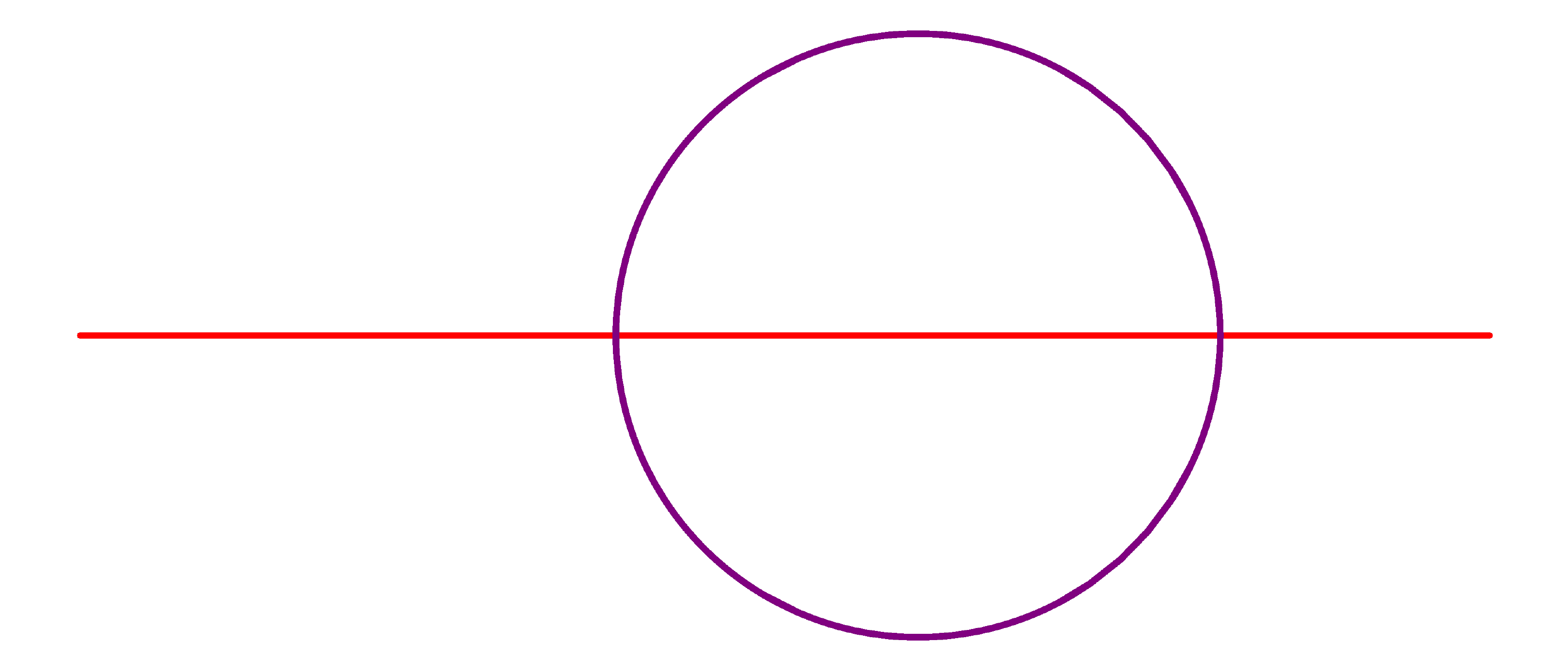}
		\end{minipage}
		
		\begin{minipage}{0.5\hsize}
			\centering
			\includegraphics[width=2.5cm, angle=270]{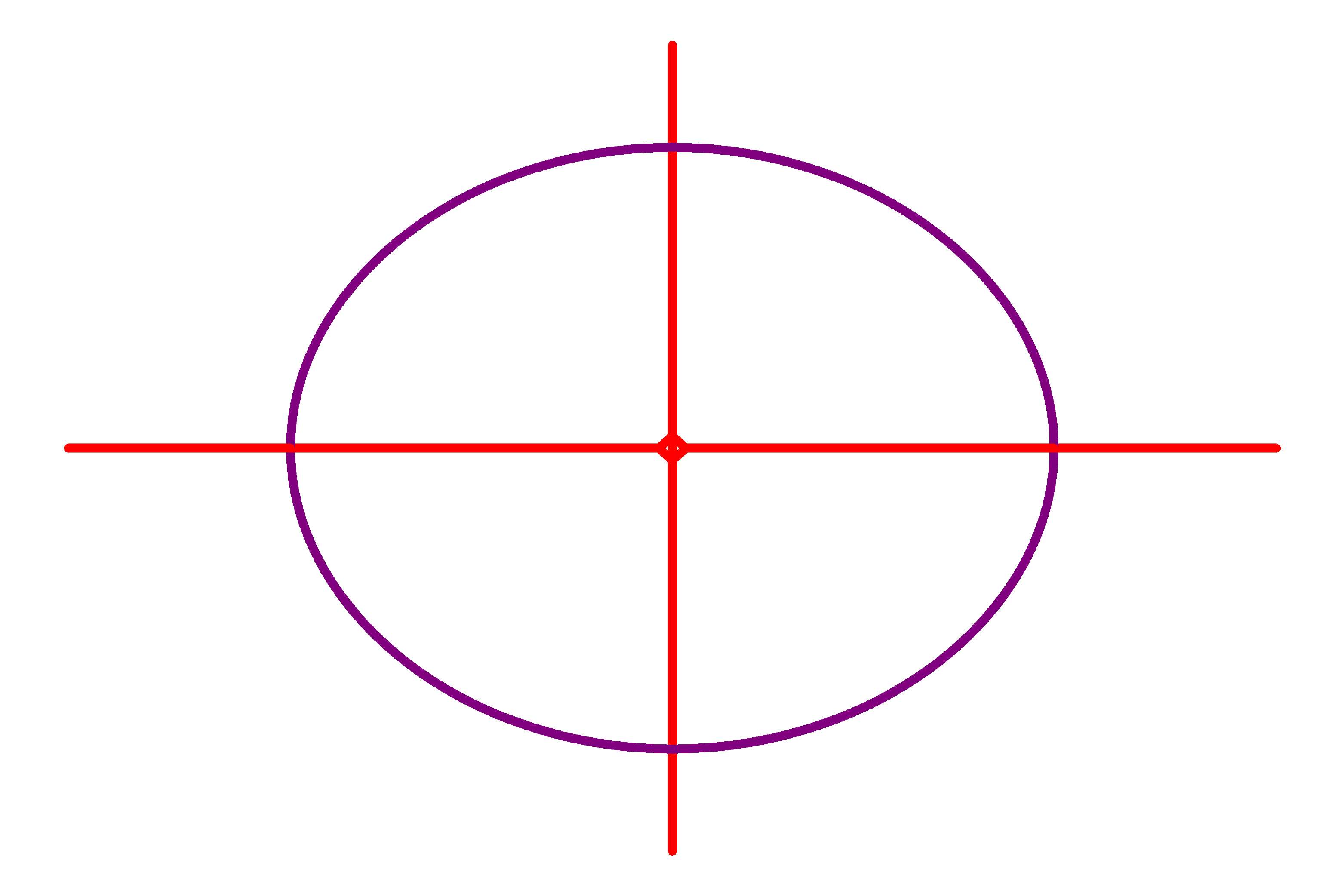}
		\end{minipage}
		
		\\
		\begin{minipage}{0.45\hsize}
			\centering
			minimal M\"{o}bius strip
		\end{minipage}
		
		\begin{minipage}{0.45\hsize}
			\centering
			minimal once-punctured Klein bottle
		\end{minipage}	
	\end{tabular}
	\caption{Nonorientable complete minimal surfaces (top) and their intersection with the $(x_1,x_2)$-plane (bottom).}\label{fig:nonorimin}
\end{figure}
	
L\'{o}pez's example was generalized to complete nonorientable minimal surfaces with arbitrary genus \cite{LM}(See Figure \ref{fig:high_genus_nonorimin}). 
Moreover, many nonorientable minimal surfaces were found in \cite{Ku}, \cite{MW}, \cite{O} and so on.

\begin{figure}[htbp]
	\centering
	\begin{tabular}{c}
		\begin{minipage}{0.31\hsize}
			\centering
			\includegraphics[width=3.6cm]{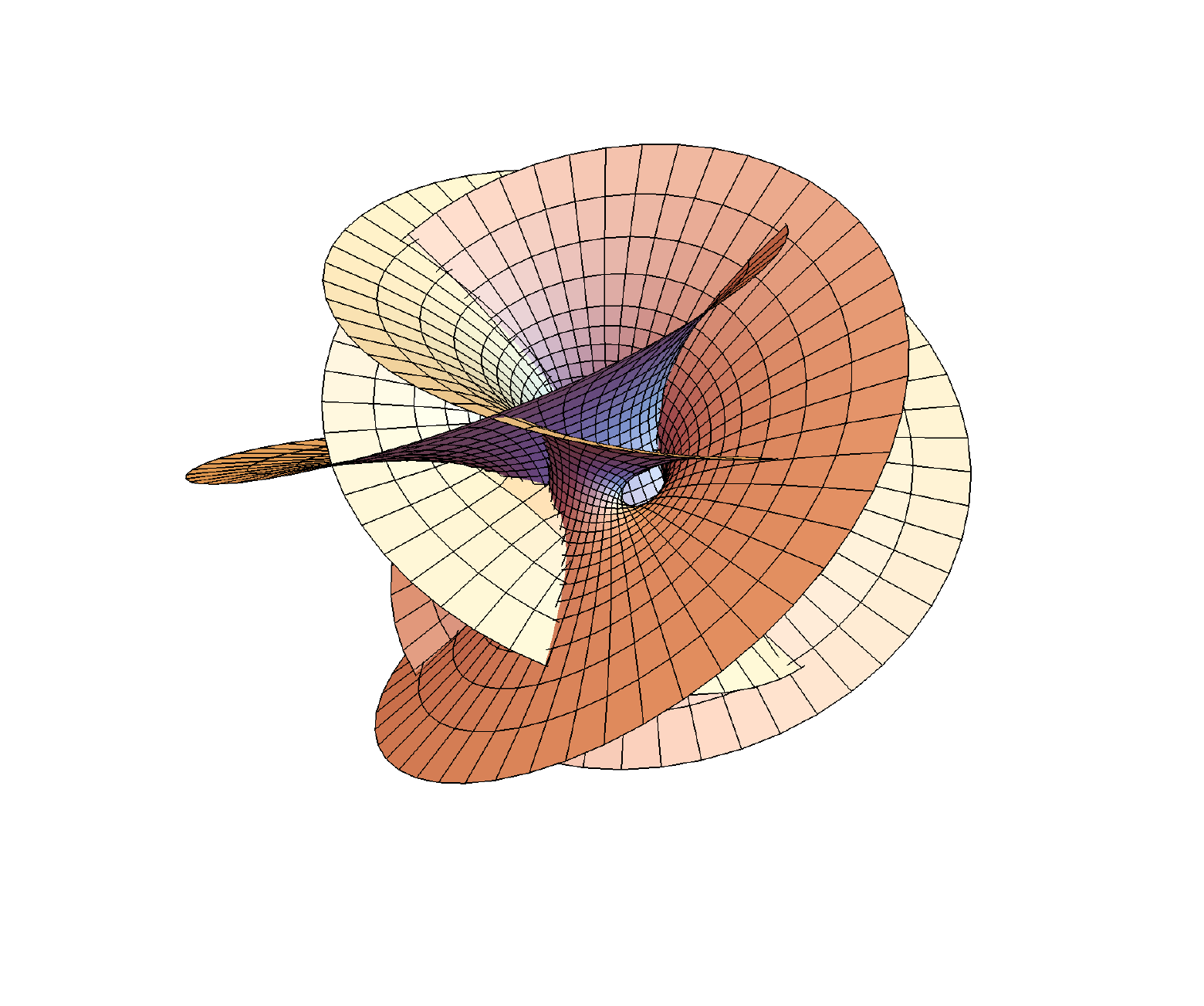}
		\end{minipage}
	
		\begin{minipage}{0.31\hsize}
			\centering
			\includegraphics[width=3.6cm]{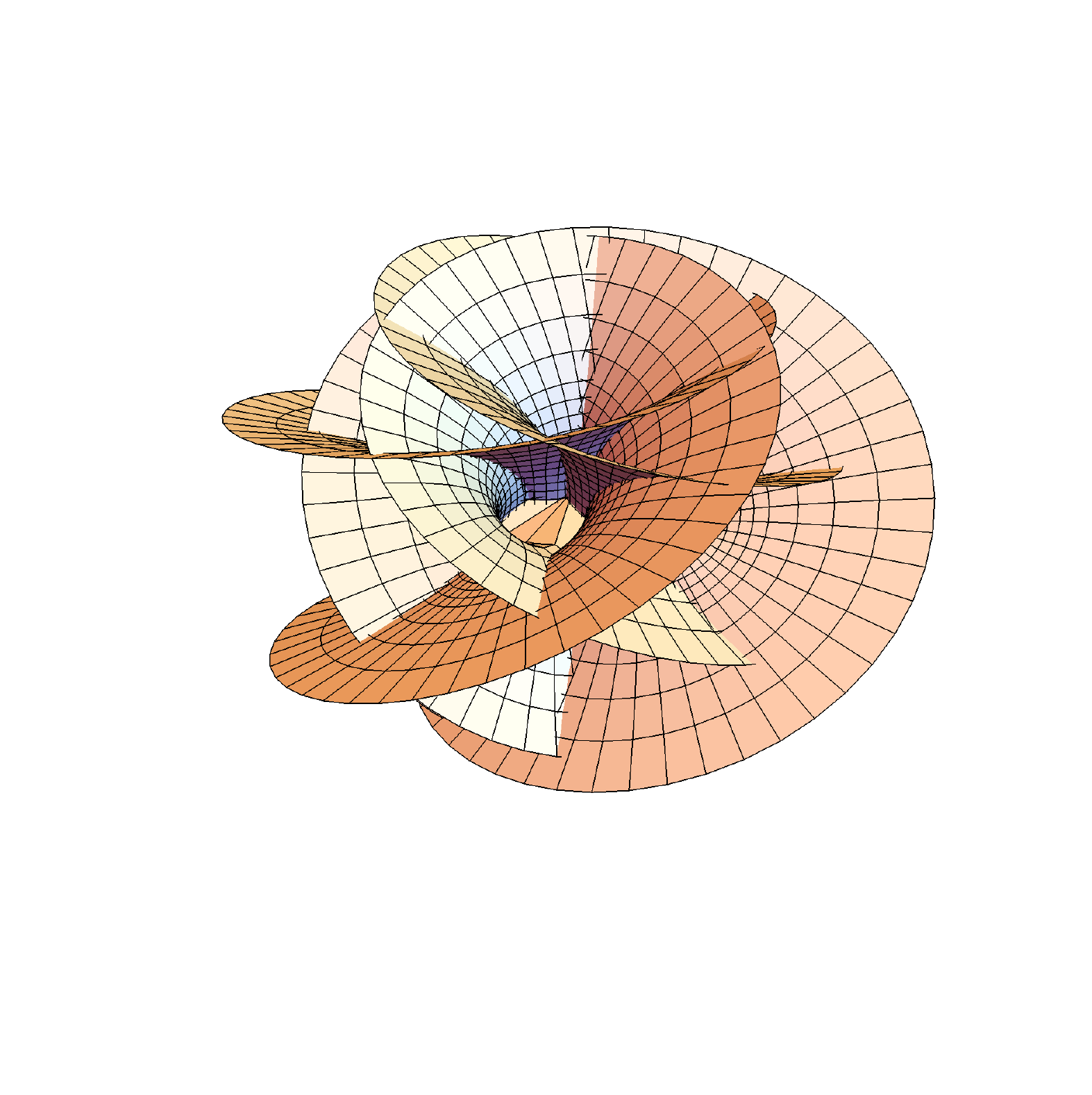}
		\end{minipage}
		
		\begin{minipage}{0.31\hsize}
			\centering
			\includegraphics[width=3.6cm]{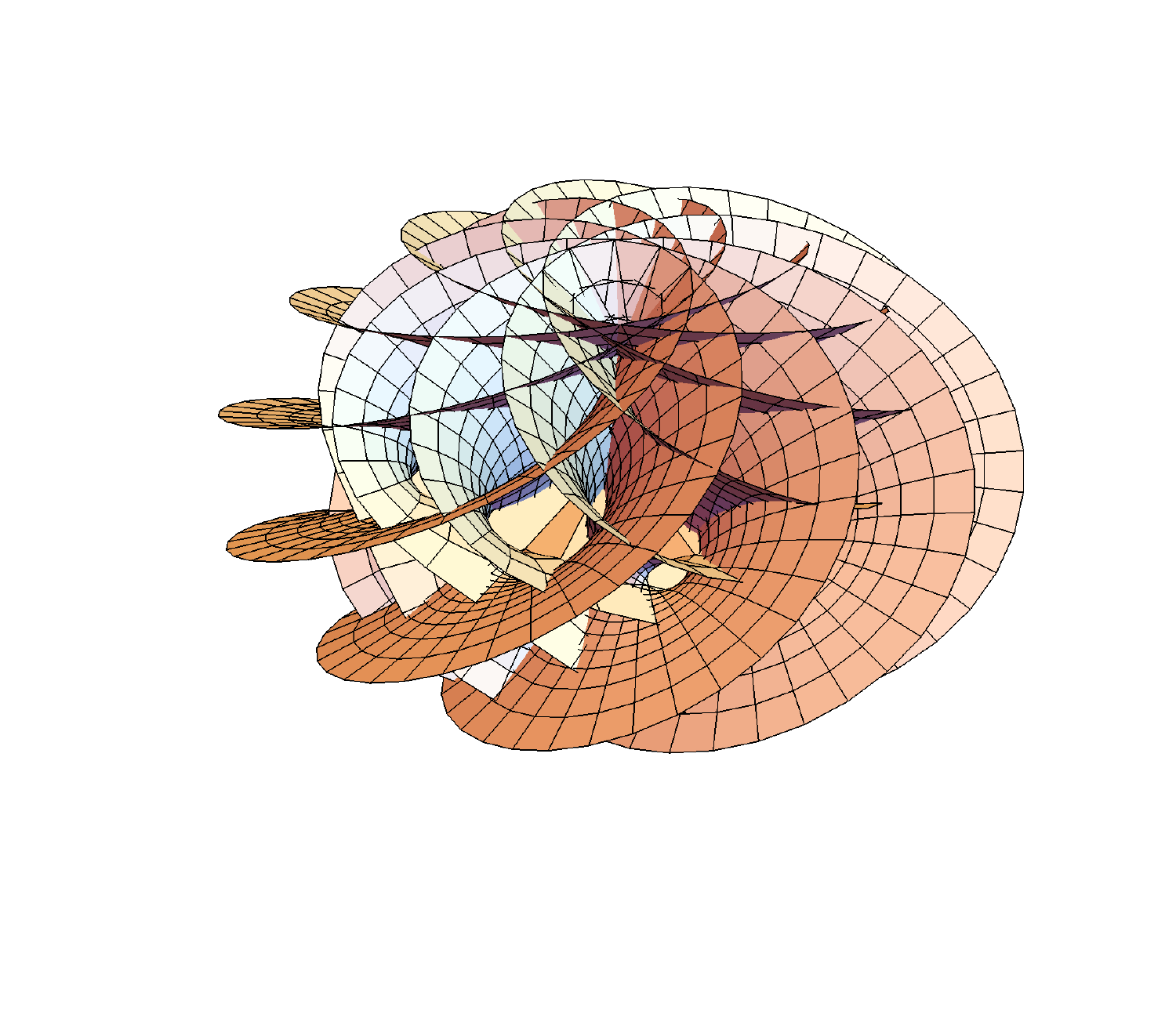}
		\end{minipage}
		\\
		\begin{minipage}{0.31\hsize}
			\centering
			\includegraphics[width=2.8cm]{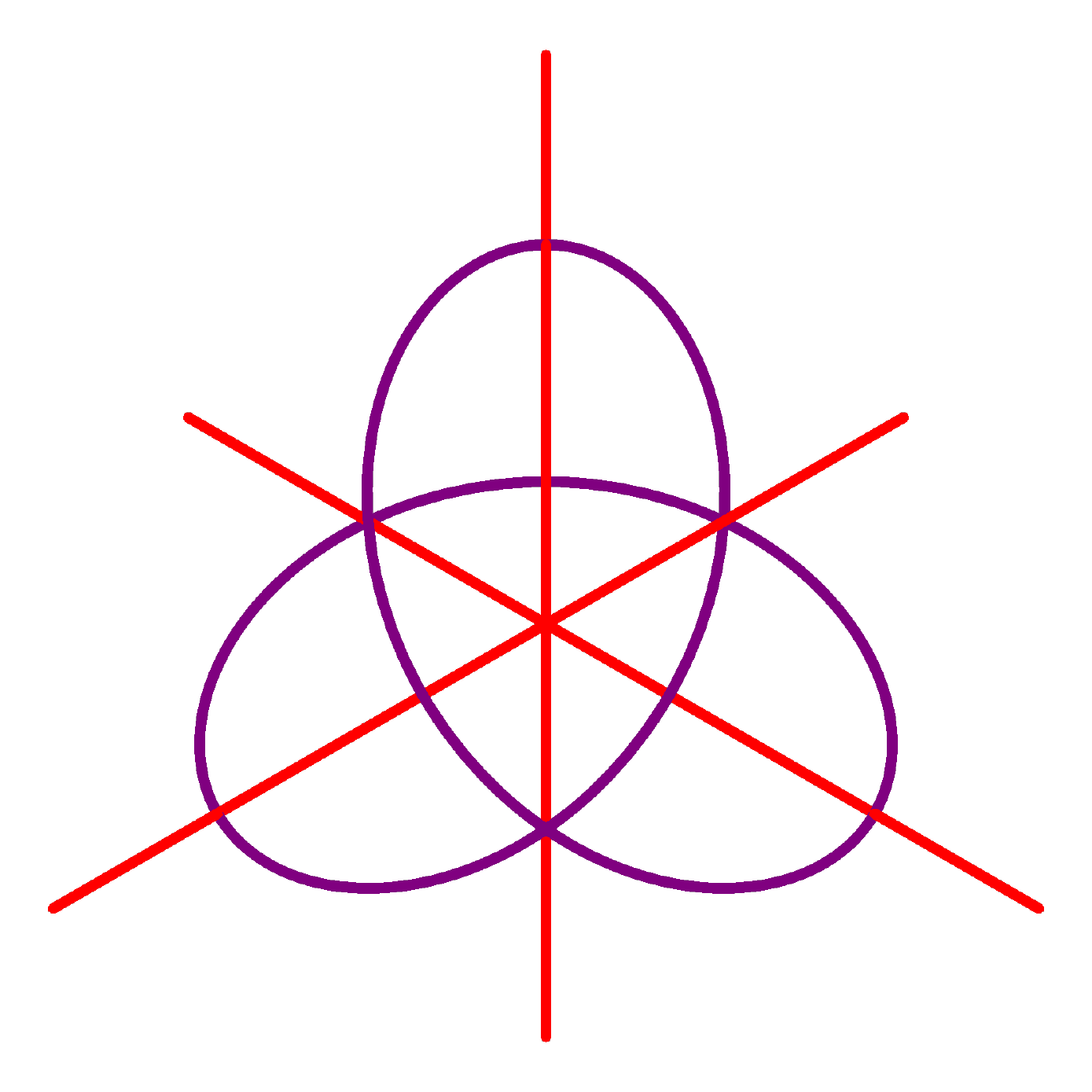}
		\end{minipage}
	
		\begin{minipage}{0.31\hsize}
			\centering
			\includegraphics[width=2.8cm]{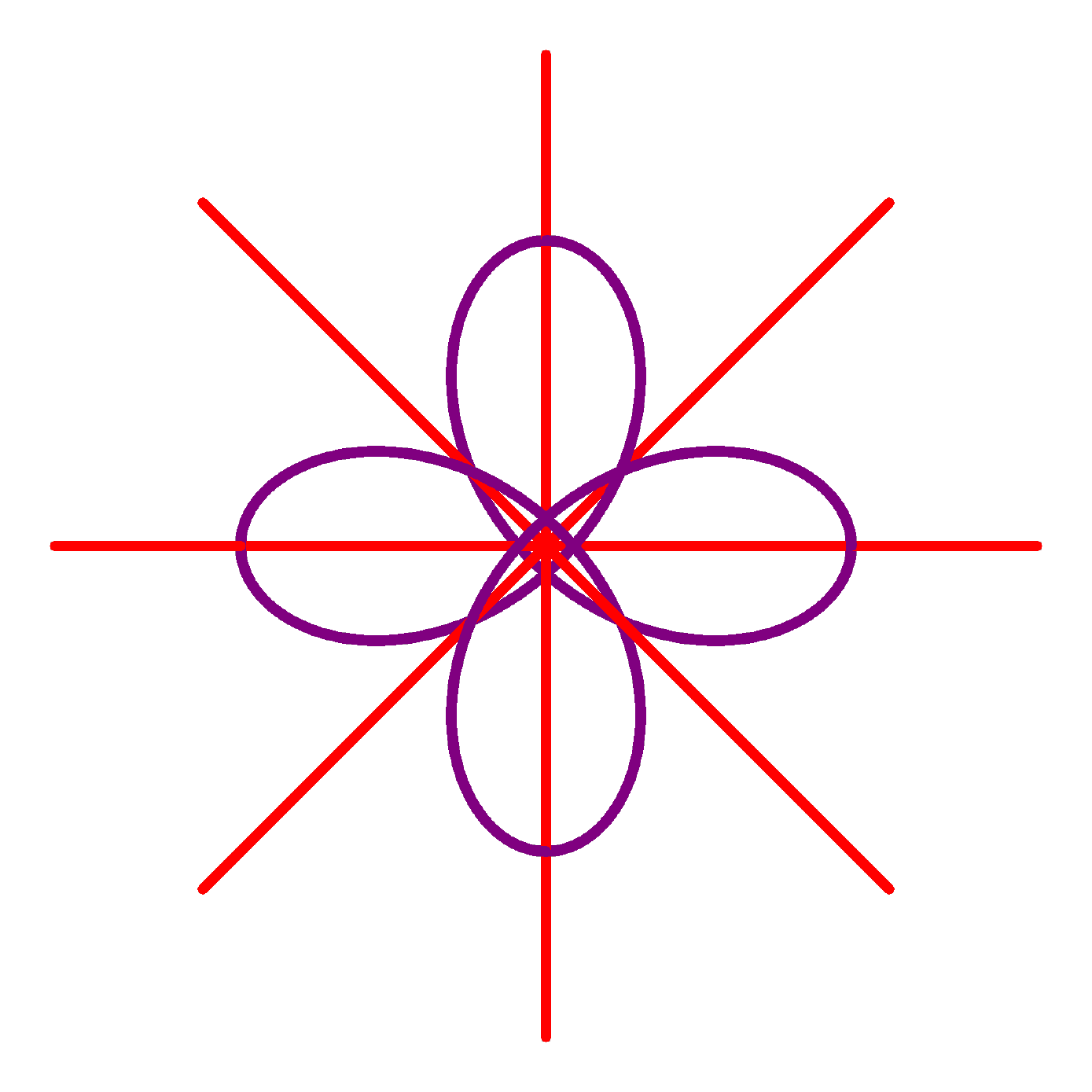}
		\end{minipage}
		
		\begin{minipage}{0.31\hsize}
			\centering
			\includegraphics[width=2.6cm]{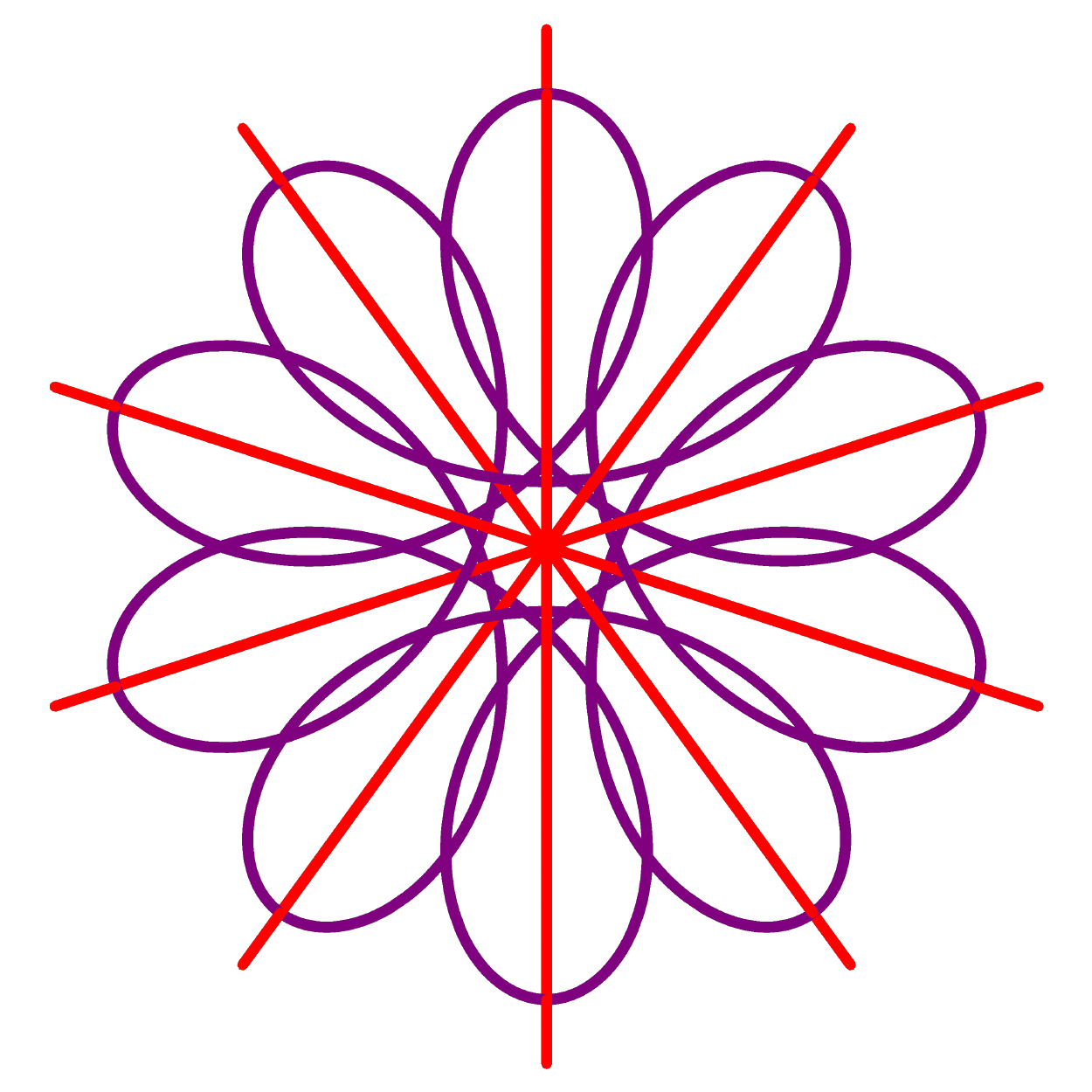}
		\end{minipage}
	\end{tabular}
\caption{Higher genus nonorientable minimal surfaces (top) and their intersection with the $(x_1,x_2)$-plane (bottom).}\label{fig:high_genus_nonorimin}
\end{figure}

A maximal surface in Lorentz-Minkowski 3-space $\L^3$ is a spacelike surface with zero mean curvature. 
Like in the case of minimal surfaces, there is a Weierstrass-type representation formula \cite{Ko} by using holomorphic data.
So, maximal surfaces share many local properties with minimal surfaces.
However, their global behavior is quite different.
In fact, the only complete maximal surface in $\L^3$ (without singularities) is the plane \cite{C, CY}.
Maximal surfaces with singularities are investigated in \cite{ER}, \cite{FeL}, \cite{FLS1}, \cite{FLS2}, \cite{FSUY}, \cite{UY} and so on. 
M. Kokubu and M. Umehara investigated  co-orientability of maximal surfaces in \cite{KU}.

The first author and F. J. L\'{o}pez studied some basic aspects of the global theory of nonorientable maximal surfaces with singularities in \cite{FuL}.
Among other things, they exhibited some fundamental examples (See Figure \ref{fig:nonorimax}) and obtained some classification results; the construction of higher genus examples was postponed.

\begin{figure}[htbp]
	\centering
	\begin{minipage}{0.24\hsize}
 		\centering
 		\includegraphics[width=2.9cm]{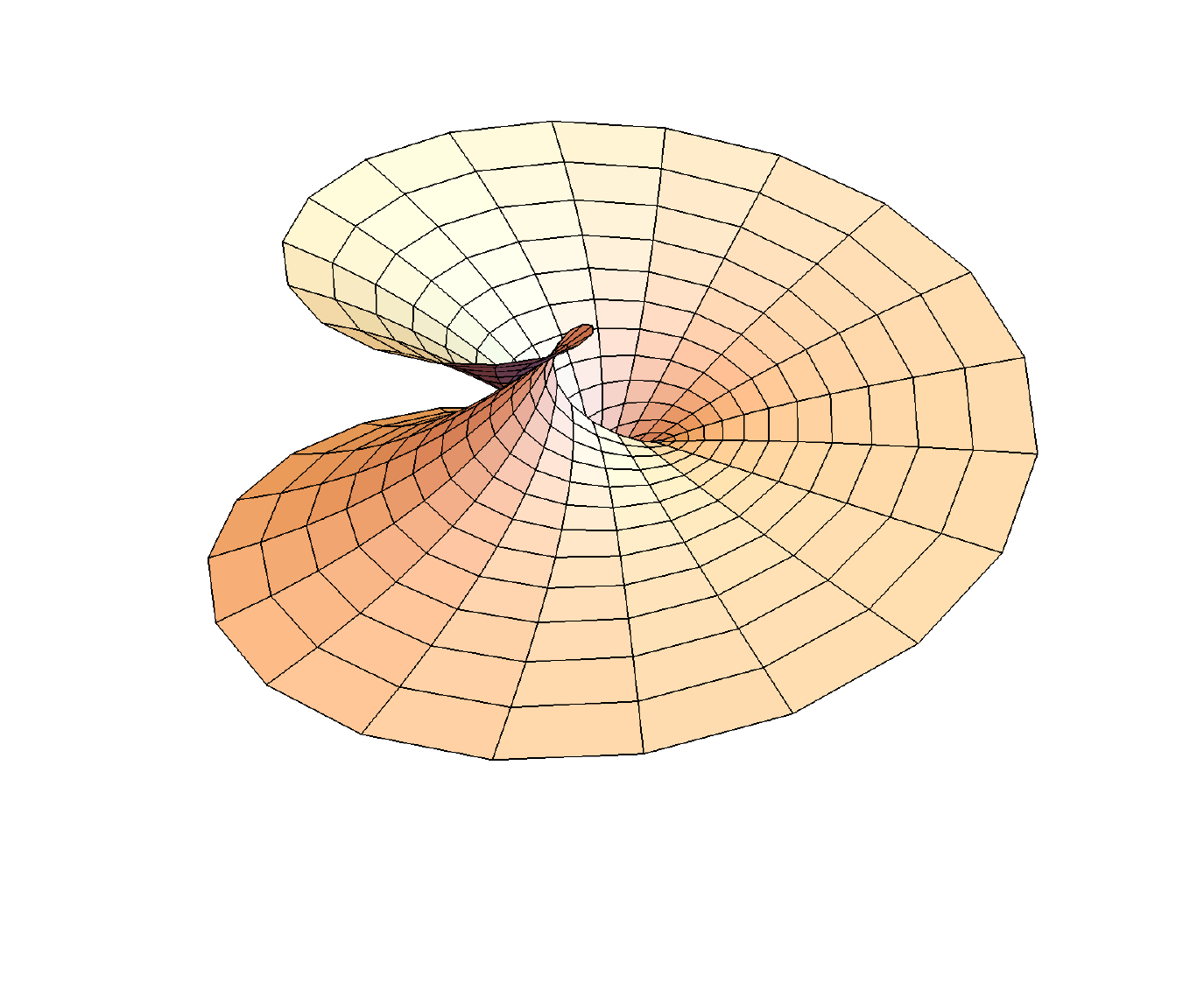}
	\end{minipage}
	\begin{minipage}{0.23\hsize}
		\centering
		\includegraphics[width=2.6cm]{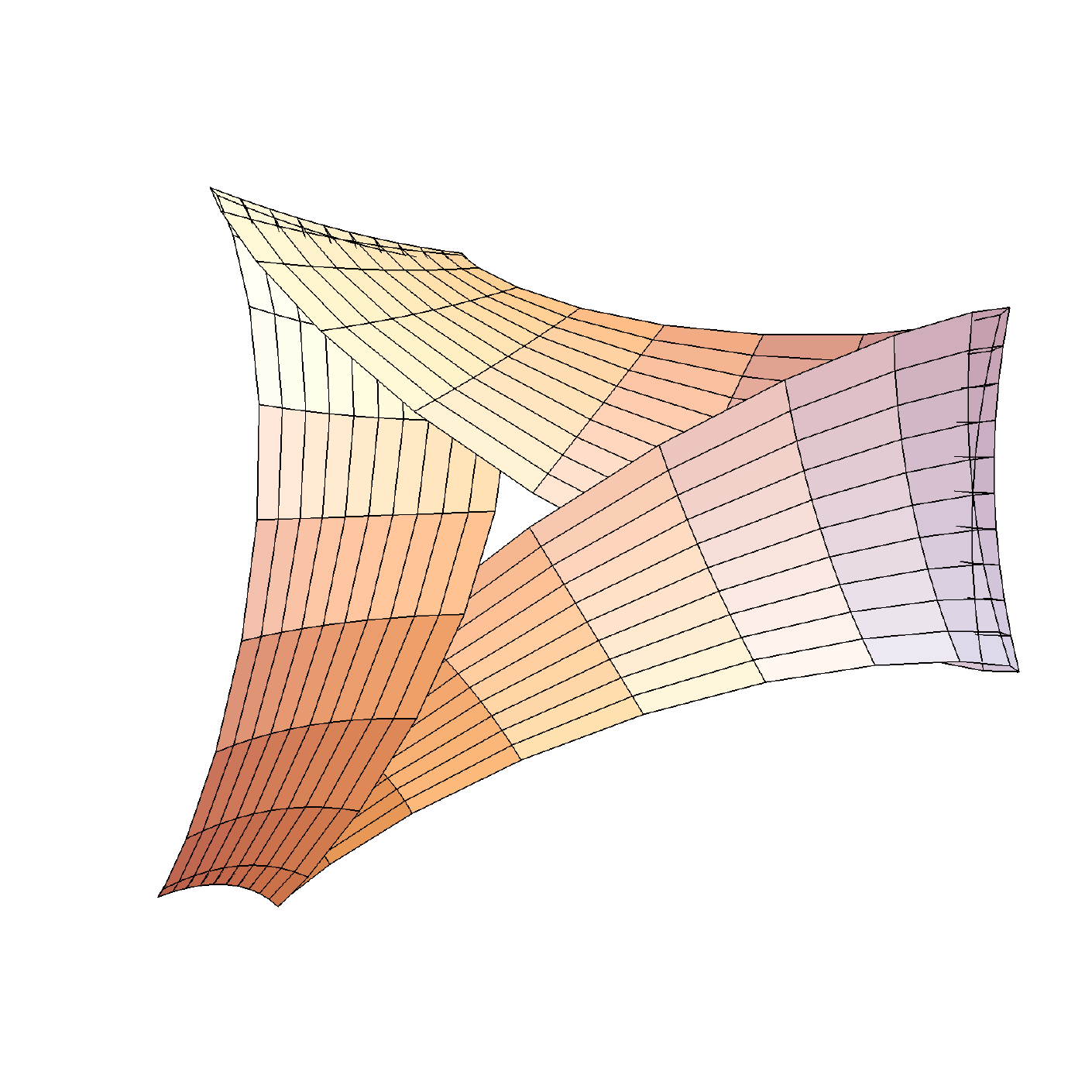}
	\end{minipage}
	\begin{minipage}{0.23\hsize}
 		\centering
 		\includegraphics[width=2.5cm]{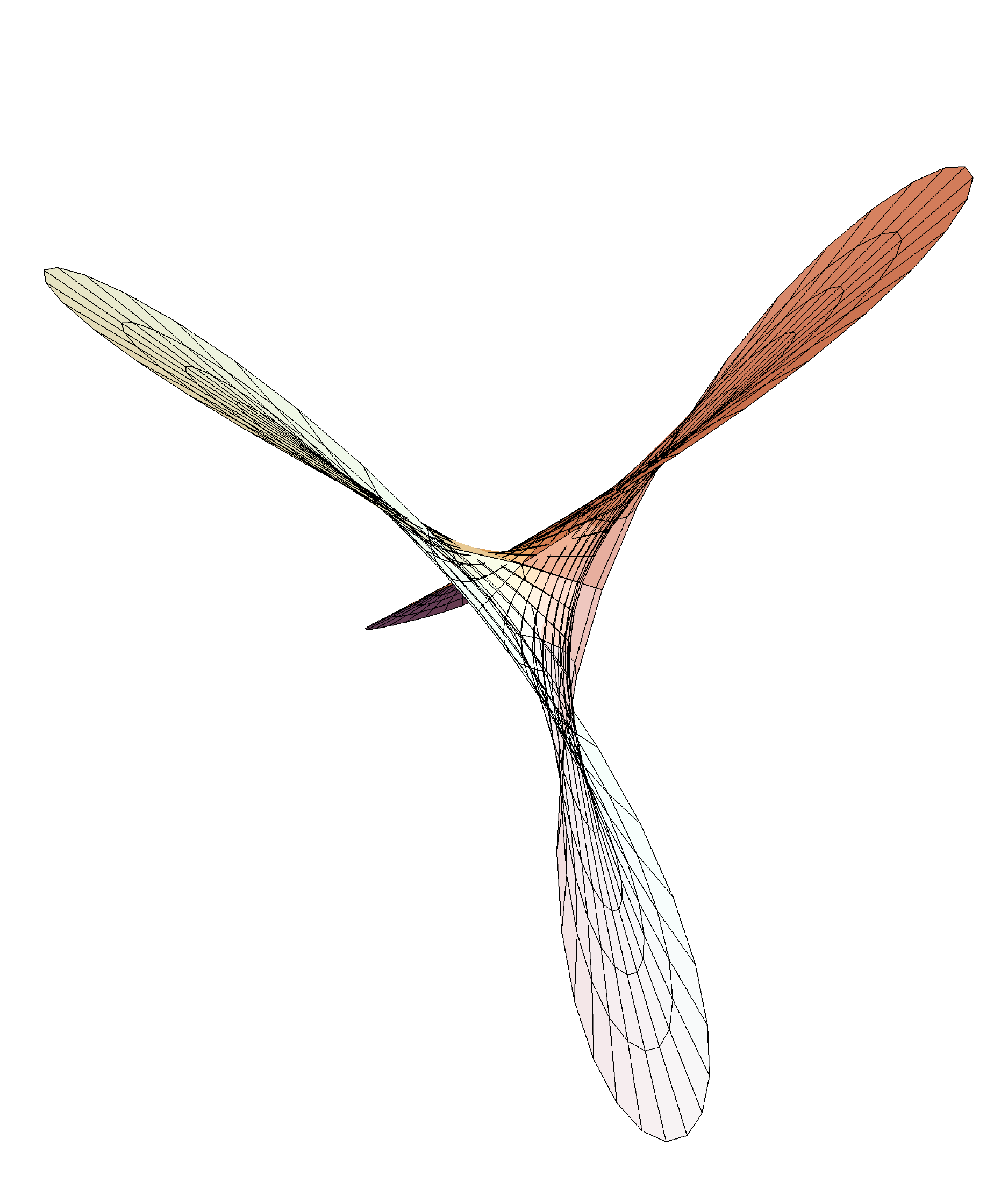}
	\end{minipage}
	\begin{minipage}{0.23\hsize}
		\centering
		\includegraphics[width=2.5cm]{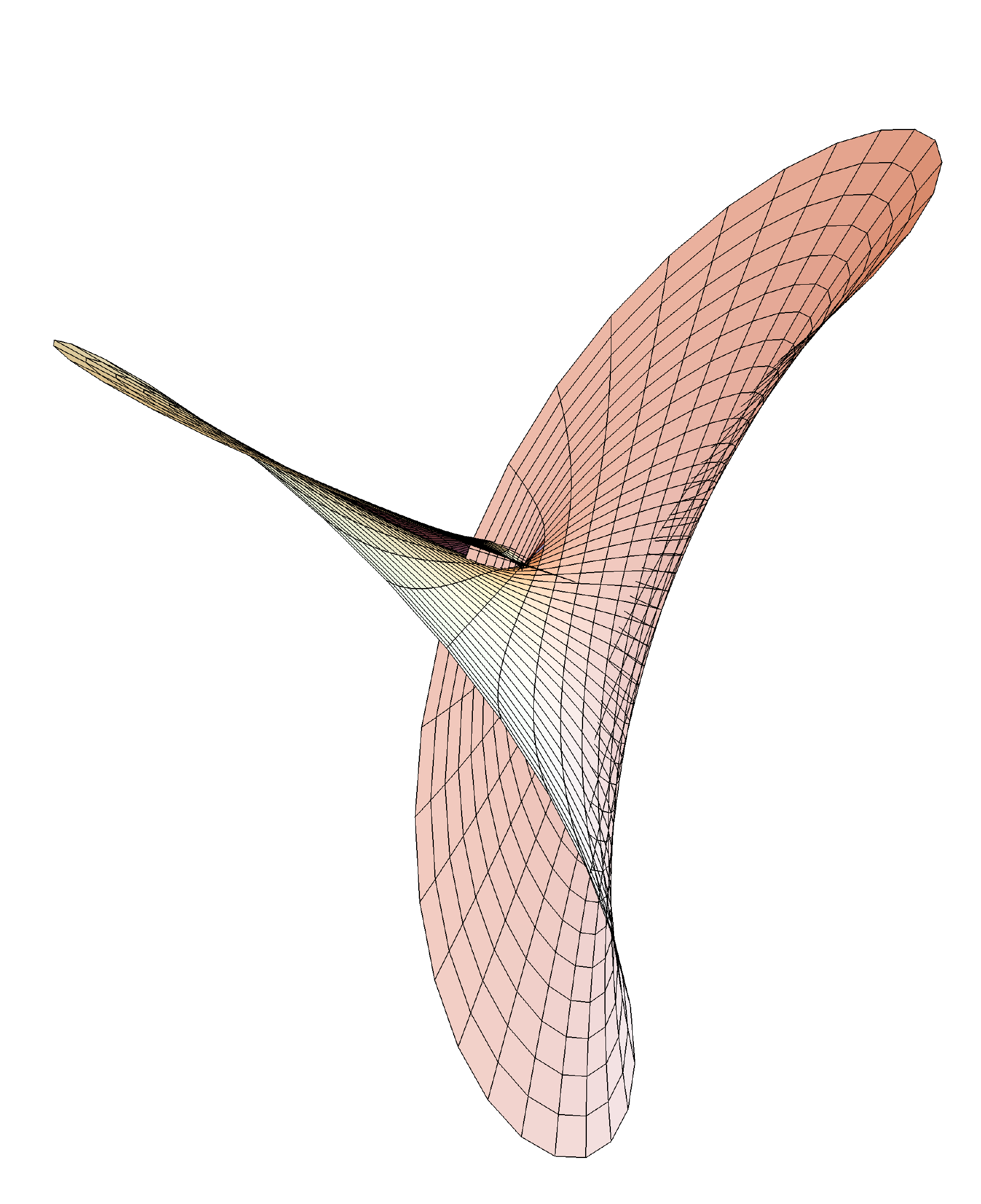}
	\end{minipage}
	\\
	\begin{minipage}{0.24\hsize}
		\hspace*{10pt}
		\includegraphics[width=2.3cm]{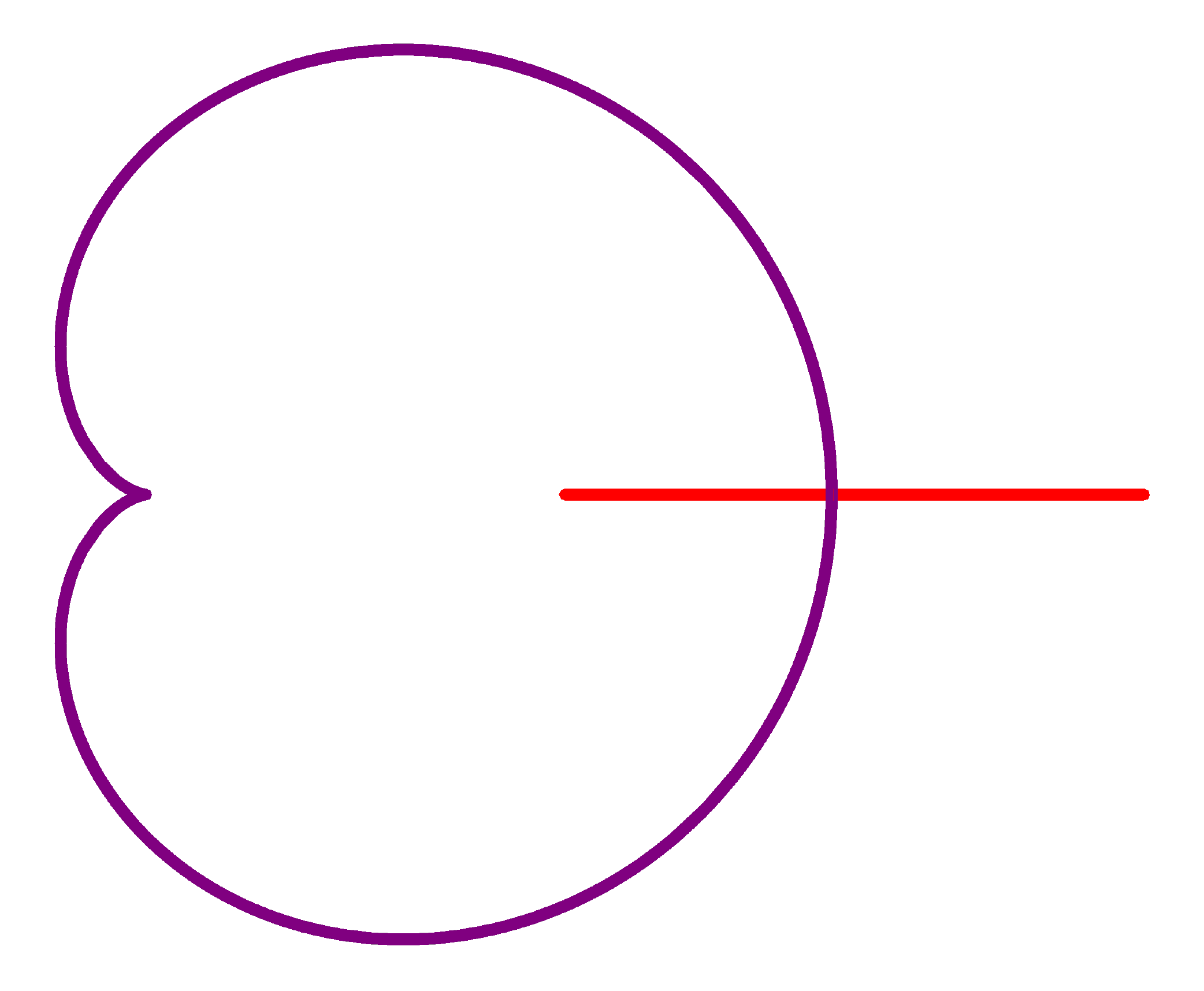}
	\end{minipage}
	\begin{minipage}{0.23\hsize}
		\centering
		\includegraphics[width=2.2cm]{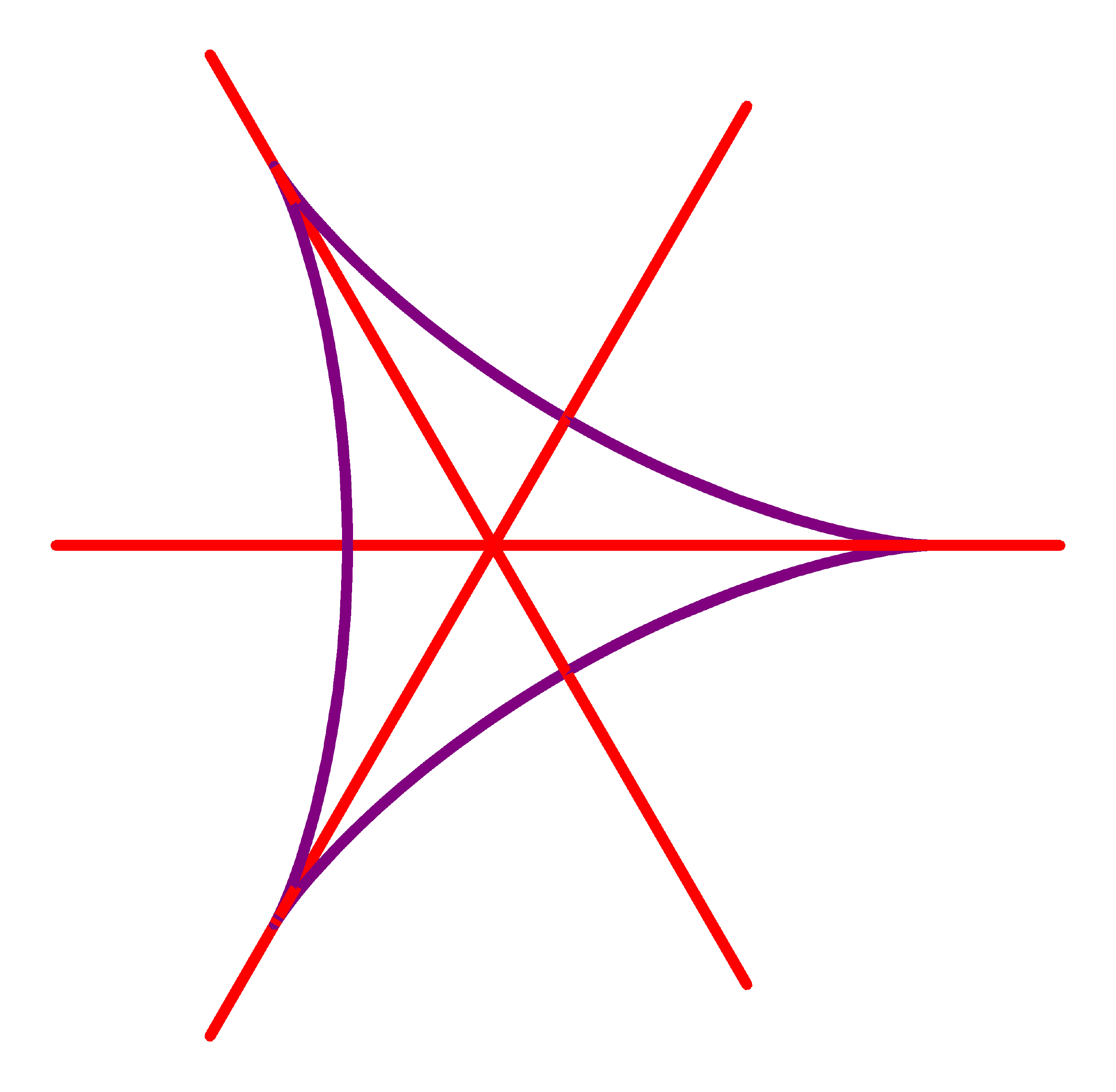}
	\end{minipage}
	\begin{minipage}{0.23\hsize}
 		\hspace*{10pt}
 		\includegraphics[width=2.2cm]{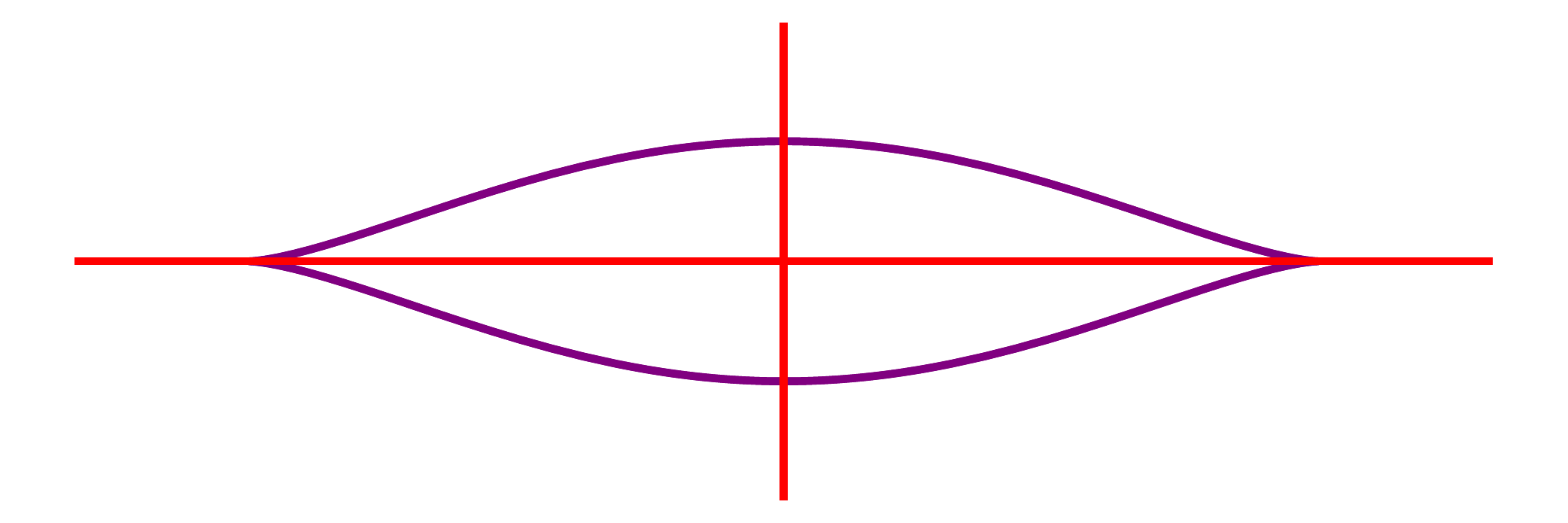}
	\end{minipage}
	\begin{minipage}{0.23\hsize}
		\centering
		\includegraphics[width=2.2cm]{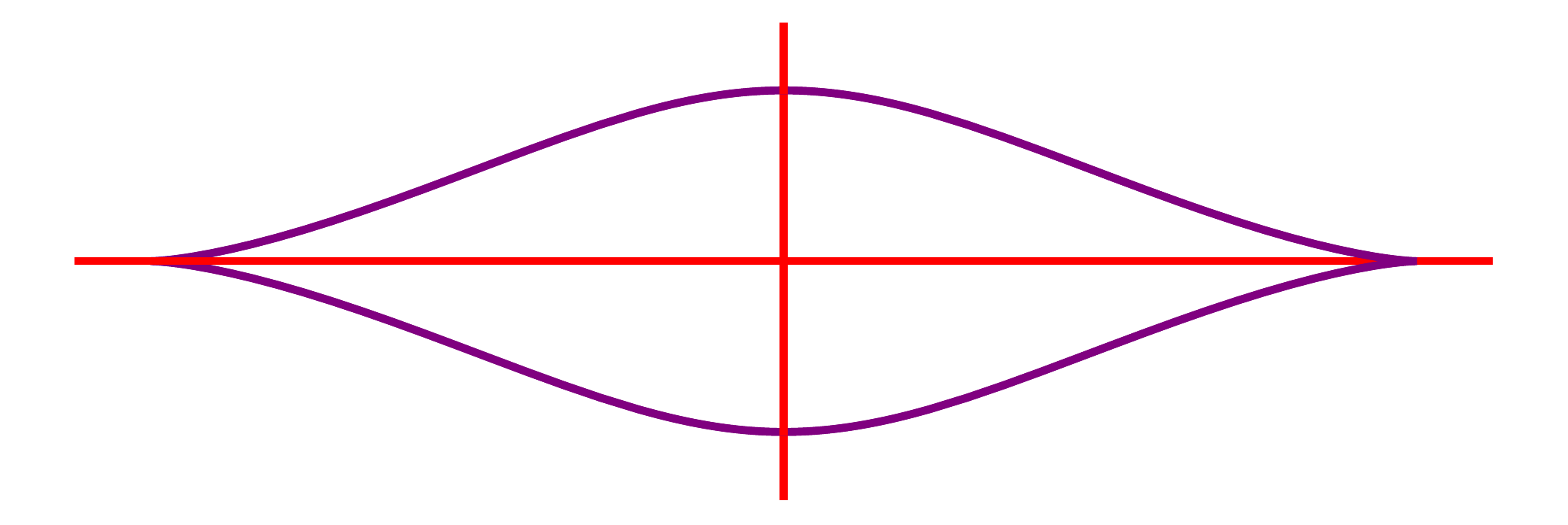}
	\end{minipage}
\\
	\begin{minipage}{0.47\hsize}
 		\centering
 		maximal M\"{o}bius strips
	\end{minipage}
	\begin{minipage}{0.47\hsize}
		\centering
		maximal once-punctured Klein bottles
	\end{minipage}
 \caption{Nonorientable maximal surfaces (top) and their intersection with the $(x_1,x_2)$-plane (bottom)}\label{fig:nonorimax}
 \end{figure} 

In this paper we construct nonorientable maximal surfaces of high genus based on \cite{LM}.
The main difficulty is to solve the period problem associated to the Weierstrass data, that now is posed on a Riemann surface with higher homology group.

This paper is organized as follows:
In Section \ref{sec:pre}, we recall the definition of nonorientable maximal surface with singularities and introduce some basic properties of these surfaces.
Section \ref{sec:exis} is devoted to the construction of higher genus nonorientable maximal surfaces.
In Section \ref{sec:sym}, we discuss the symmetry groups of the surfaces.

\section{Preliminaries}\label{sec:pre}
Let $\L^3$ be the three dimensional Lorentz-Minkowski space with the metric $\ii{~}{~} := dx_1^2 + dx_2^2-dx_3^2$. An immersion $f : M \to \L^3$ of a 2-manifold $M$ into $\L^3$ is called  {\it{spacelike}} if  the induced metric $ds^2 = f^*\ii{~}{~}$ is positive definite. A spacelike immersion $f : M \to \L^3$ is said to be {\it{maximal}} if the mean curvature vanishes identically.

\begin{Def}[Maxfaces \cite{UY}, see also \cite{FKKRSUYY}]
	Let $M$ be a Riemann surface and  $f : M \to \L^3$ a $C^{\infty}$-map. 
	$f$ is called a maxface if there exists an open dense subset $W$ of $M$ so that the restriction $f|_{W} : W \to \L^3$ is a conformal maximal immersion and the rank of $df$ at $p$ is positive for all $p \in M$.
	A point $p \in M$ is said to be singular if rank$(df) <2$ at $p$.
\end{Def}

\begin{Thm}[Weierstrass-type representation \cite{Ko, UY}]\label{th:wr}
	Let $(g, \eta)$ be a pair consisting of a meromorphic function $g$ and a holomorphic differential $\eta$ on a Riemann surface $M$ such that 
	\begin{equation}\label{eq:liftmetric}
		(1+|g|^2)^2\eta\bar{\eta}
	\end{equation}
	gives a Riemannian metric on $M$. 
	Set
	\begin{equation}\label{eq:wr}
		\Phi =
		 \begin{pmatrix}
			\phi_1 \\
			\phi_2 \\
			\phi_3
		\end{pmatrix}
		=
		\begin{pmatrix}
			(1 + g^2)\eta \\
			i(1 - g^2)\eta \\
			2g\eta
		\end{pmatrix}
		.
	\end{equation}
	Suppose that
	\begin{equation}\label{eq:period}
		\Re \oint_{\gamma} \Phi = {\bf{0}}
	\end{equation}
	hols for any $\gamma \in H_1(M, \Z).$ Then
	\begin{equation}
		f = \Re \int_{z_0}^{z} \Phi : M \to \L^3, \qquad (z_0 \in M)
	\end{equation}
	is a maxface. 
	The singular set of f corresponds to $\{p \in M \: |\: |g(p)|=1\}$, and the first and second fundamental forms of $f$ are given by
	\[
		(1-|g|^2)^2\eta\bar{\eta}\quad and \quad \eta dg + \bar{\eta} d\bar{g}
	\]
	respectively. We call $(M, g, \eta)$ $($or $(g, \eta)$$)$ the Weierstrass data of $f : M \to \L^3$ $($We define the Weierstrass data of a nonorientable maxface later$)$.
\end{Thm}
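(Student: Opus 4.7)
The plan is to follow the standard Weierstrass argument adapted to the Lorentzian signature. First I would verify the null identity
\begin{equation*}
\phi_1^2 + \phi_2^2 - \phi_3^2 = \bigl[(1+g^2)^2 - (1-g^2)^2 - 4g^2\bigr]\eta^2 = 0,
\end{equation*}
which expresses that $\Phi$ takes values in the null cone of $\L^3$ and is the algebraic shadow of conformality in Lorentzian signature. Combined with the period condition \eqref{eq:period}, this makes $f(z) := \Re\int_{z_0}^{z}\Phi$ a well-defined smooth map $M\to\L^3$, and each coordinate $f_j$ is harmonic on $M$ because $\phi_j$ is holomorphic.

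Next I would compute the pullback tensor $f^*\ii{~}{~}$ in a local holomorphic chart $z$. Writing $\phi_j = h_j\,dz$, the null identity $h_1^2 + h_2^2 - h_3^2 = 0$ forces the symmetric $dz^2$-part to vanish, so the pullback is conformal with conformal factor $\tfrac{1}{2}(|\phi_1|^2 + |\phi_2|^2 - |\phi_3|^2)$. A short calculation using $|1+g^2|^2 + |1-g^2|^2 - 4|g|^2 = 2(1-|g|^2)^2$ shows this equals $(1-|g|^2)^2\,\eta\bar\eta$. On $W := \{p \in M \mid |g(p)|\neq 1\}$ the pullback tensor is positive definite, so $f|_W$ is a conformal spacelike immersion; its mean curvature vanishes because the coordinate functions are harmonic, via the identity $\Delta_{\rm ind}\,f = 2H\mathbf n$ for conformal spacelike immersions into $\L^3$. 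The stated formula $\eta\,dg + \bar\eta\,d\bar g$ for the second fundamental form then follows from a direct computation using the Gauss map encoded by $g$ (via stereographic projection onto the hyperbolic plane model of the de Sitter sphere).

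It remains to check that $W$ is open and dense in $M$ and that $df$ has positive rank at every point of $M$. Openness is immediate; density follows because for a non-constant meromorphic $g$ the set $\{|g|=1\}$ is a real-analytic $1$-dimensional subvariety of $M$, hence has empty interior (the degenerate case where $g$ is constant of unit modulus is incompatible with $(1-|g|^2)^2\eta\bar\eta$ being nontrivial on any immersive open set). The positive-rank condition at singular points is exactly where the Riemannian hypothesis on the lift metric $(1+|g|^2)^2\eta\bar\eta$ plays its role: at a point $p$ with $|g(p)|=1$ the factor $(1+|g|^2)^2$ is finite and positive, so the hypothesis forces $\eta(p)\neq 0$; since $|g(p)|=1$ also implies $g(p)\neq 0$, we obtain $\phi_3(p) = 2g(p)\eta(p) \neq 0$, and therefore $df_3(p) \neq 0$. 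The main technical point to watch is the careful bookkeeping of real-versus-complex structures around the null form $\Phi$; once the null identity and the lift-metric hypothesis are in hand, every step is essentially formal.
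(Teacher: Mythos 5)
The paper does not prove this theorem at all: it is quoted as a known result from the cited references [Ko] and [UY], so there is no internal proof to compare against. Your sketch is the standard argument from those sources and is correct in outline: the null identity $\phi_1^2+\phi_2^2-\phi_3^2=0$ kills the $dz^2$-part of the pullback metric, the identity $|1+g^2|^2+|1-g^2|^2-4|g|^2=2(1-|g|^2)^2$ gives the conformal factor and identifies the singular set with $\{|g|=1\}$, harmonicity of $\Re\int\Phi$ gives $H=0$ on $W=\{|g|\neq 1\}$, and the lift-metric hypothesis forces $\phi_3(p)\neq 0$ at singular points, which is exactly the positive-rank condition in the definition of maxface. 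The only thin spot is the second fundamental form, which you assert rather than derive; and you are right to flag that the hypothesis as literally stated does not by itself exclude $g$ constant of unit modulus, a degenerate case that must be ruled out for $W$ to be dense.
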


\begin{Rem}
	The condition (\ref{eq:period}) is called the period problem, which guaranties the well-defindness of $f$. 
	It is equivalent to 
	\begin{equation}\label{eq:period01}
		\oint_{\gamma} g^2\eta + \overline{\oint_{\gamma}\eta} = 0
	\end{equation}
	and
	\begin{equation}\label{eq:period02}
		\Re \oint_{\gamma} g\eta = 0
	\end{equation}
	for any $\gamma \in H_1(M, \Z)$. 
\end{Rem}

\begin{Def}[{\cite[Definition 1.3]{UY}}]
	A maxface $f : M \to \L^3$ is said to be {\it{complete}} if there exists a compact set $C$ and a symmetric $(0, 2)$-tensor $T$ on $M$ such that $T$ vanishes on $M\bash C$ and $ds^2 + T$ is a complete Riemannian metric. 
\end{Def}

\begin{Prop}[{\cite[Proposition 4.5]{UY}}]
	Let $f : M \to \L^3$ be a complete maxface with the Weierstrass data $(M, g, \eta)$. 
	Then the Riemann surface $M$ is biholomorphic to a compact Riemann surface $\overline{M}$ minus a finite number of points $\{p_1,\dots, p_n\}$. 
	Moreover, $g$ and $\eta$ extend meromorphically to $\overline{M}$.
\end{Prop}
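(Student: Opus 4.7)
The plan is to reduce this statement to the classical Huber--Osserman theorem for complete minimal surfaces of finite total curvature in $\R^3$, using the \emph{lift metric} $d\sigma^2 := (1+|g|^2)^2 \eta \bar\eta$ from (\ref{eq:liftmetric}) as a bridge. The key point is that $d\sigma^2$ is a bona fide nondegenerate Riemannian metric on all of $M$, even at the singular points of $f$ where the spacelike first fundamental form $ds^2 = (1-|g|^2)^2 \eta \bar\eta$ degenerates.

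First I would show $(M, d\sigma^2)$ is complete. Completeness of $f$ gives a compact set $C \subset M$ and a symmetric $(0,2)$-tensor $T$ supported in $C$ such that $ds^2 + T$ is complete. Off $C$ we have the pointwise bound $d\sigma^2 \geq ds^2$, since $(1+|g|^2)^2 \geq (1-|g|^2)^2$, so any divergent curve eventually leaving $C$ has $d\sigma^2$-length no less than its $ds^2$-length, which is infinite; and since $d\sigma^2$ is a smooth positive-definite metric on $C$ itself, every divergent curve has infinite total $d\sigma^2$-length, so $d\sigma^2$ is complete. Next, the same data $(g,\eta)$ serve, via the minimal-surface Weierstrass formula, as the Weierstrass representation of a conformal minimal immersion $\tilde f$ of $M$ (or of a suitable cover) into $\R^3$ whose induced metric is precisely $d\sigma^2$. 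Thus $\tilde f$ is a complete conformal minimal immersion.

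To invoke Huber's theorem, which states that a complete Riemannian surface of finite total absolute curvature is conformally a compact Riemann surface minus finitely many points, it remains to verify $\int_M |K_{d\sigma^2}|\, dA_{d\sigma^2} < \infty$. This is the \textbf{main obstacle}: maxface completeness is weaker than Riemannian completeness of $ds^2$ and does not \emph{a priori} force finite total curvature. The standard approach, following Umehara--Yamada, is to exploit the identity $K_{d\sigma^2}\, dA_{d\sigma^2} = - g^{\ast} \omega_{\SP^2}$, where $\omega_{\SP^2}$ is the round area form on the $2$-sphere, and then to analyze each topological end of $M$ individually: the asymptotic behavior forced by the completeness hypothesis on $ds^2 + T$ confines $g$ to covering the sphere with finite multiplicity near each end. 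Once finite total curvature is established, Huber's theorem identifies $M$ biholomorphically with $\overline M \setminus \{p_1,\dots,p_n\}$ for a compact Riemann surface $\overline M$, and Osserman's removable-singularities argument applied to the complete minimal surface $\tilde f$ shows that its Gauss map $g$ extends meromorphically across each $p_j$; meromorphicity of $\eta$ on $\overline M$ then follows from that of the components $\phi_k$ in (\ref{eq:wr}) together with the identity $\phi_3 = 2 g \eta$.
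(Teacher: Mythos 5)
The paper itself offers no proof of this proposition---it is quoted verbatim from \cite[Proposition 4.5]{UY}---so the comparison below is with the argument of the cited source. Your proposal has a genuine gap precisely at the step you yourself label the main obstacle. Huber's theorem applied to the lift metric $d\sigma^2=(1+|g|^2)^2\eta\bar\eta$ requires integrability of the \emph{negative part} of its curvature; since $K_{d\sigma^2}\le 0$ everywhere, that is the full total curvature $\int_M g^*\omega_{\mathbb{S}^2}$, and completeness of $d\sigma^2$ (equivalently, of the associated minimal surface $\tilde f$) does not imply its finiteness---the helicoid is a complete minimal surface of infinite total curvature. The sentence claiming that the completeness hypothesis ``confines $g$ to covering the sphere with finite multiplicity near each end'' is not an argument: until you know each end is conformally a punctured disk, which is exactly what Huber is supposed to deliver, ``finite multiplicity near an end'' has no usable meaning, and a priori finiteness of $\int_M g^*\omega_{\mathbb{S}^2}$ is essentially equivalent to the meromorphic extendability of $g$ that you are trying to prove. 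As written, the key step is circular.

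The argument that works (and is the one in \cite{UY}) applies Huber to the metric $ds^2+T$ itself, exploiting a feature special to maximal surfaces that your detour through $\R^3$ discards: the Gaussian curvature of $ds^2=(1-|g|^2)^2\eta\bar\eta$ is \emph{nonnegative} wherever $ds^2$ is nondegenerate. Since $T$ vanishes off the compact set $C$ and the singular set $\{|g|=1\}$ must lie in $C$, the negative part of the curvature of the complete metric $ds^2+T$ is supported in $C$ and hence integrable; no total-curvature estimate is needed, and Huber immediately gives $M\cong\overline M\setminus\{p_1,\dots,p_n\}$. The extensions are then also easier than in Osserman's Euclidean setting: on each punctured-disk end $|g|\ne 1$, so either $g$ or $1/g$ is a bounded holomorphic function and extends across the puncture by Riemann's removable singularity theorem, while $\eta$ extends meromorphically by the standard lemma that a holomorphic $h$ on a punctured disk with $|h|^2|dz|^2$ complete at the puncture has at most a pole there. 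If you wish to keep your framework, you would have to establish finite conformal type first by some such means and only afterwards deduce finite total curvature of $d\sigma^2$ from $\deg g<\infty$---at which point the minimal-surface companion is no longer doing any work.
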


We say that a complete maxface $f : \overline{M}\bash \{p_1, \dots, p_n\} \to \L^3$ is of genus $k$ if $\overline{M}$ is a compact Riemann surface of genus $k$.

Let $M'$ be a nonorientable surface with conformal coordinates. 
We denote $\pi : M \to M'$ the orientable conformal double cover of $M'$.
\begin{Def}[{\cite[Definition 2.1]{FuL}}]\label{def:nonori}
	A conformal map $f' : M' \to \L^3$ is said to be a {\it{nonorientable maxface}} if the composition
	\[
		f = f' \circ \pi : M \to \L^3
	\]
	is a maxface. In addition, $f'$ is said to be complete if $f$ is complete. 
\end{Def}
We say that a complete nonorientable maxface $f' : M' \to \L^3$ is of genus $k$ $(k\geq1)$ if the double cover $M$ of $M'$ is biholomorphic to $\overline{M}\bash\{p_1,\dots,p_n\}$, where $\overline{M}$ is a compact Riemann surface of genus $k-1$.

Let $f' : M' \to \L^3$ be a nonorientable maxface, and let $I : M \to M$ denote the antiholomorphic order two deck transformation associated with the orientable double cover $\pi : M \to M'$. Since $f \circ I = f$, we have
\begin{equation}\label{eq:nonorientable}
	g \circ I = \frac{1}{\bar{g}}  \quad {\it{and}} \quad I^{*}\eta = \overline{g^2\eta}.
\end{equation}

Conversely, if $(g, \eta)$ is the Weierstrass data of an orientable maxface $f : M \to \L^3$ and $I$ is an antiholomorphic involution without fixed points in $M$ satisfying (\ref{eq:nonorientable}), then the unique map $f' : M' = M/\langle I \rangle \to \L^3$ satisfying that $f = f' \circ \pi$ is a nonorientable maxface. We call $(M, I, g ,\eta)$  the Weierstrass data of $f' : M' \to \L^3$.

\section{Existence}\label{sec:exis}
Let $\overline{M_k}$ be the compact Riemann surface of genus $k$ defined by 
\[
	\overline{M_k} = \left\{(z, w)\in \Rs^2 \; \left|\; w^{k+1}=\dfrac{z(z-r)}{rz+1}\right.\right\}, \qquad r \in \R\bash\{0, 1\},
\]
where $\Rs = \C \cup \{\infty\}$, and set $M_k = \overline{M_k}\bash \{(0,0),(\infty,\infty)\}$. 
Define
\begin{align}
	\begin{aligned}\label{aligned:wd}
		I : \overline{M_k} \to \overline{M_k}, \qquad I(z, w) = \left(-\dfrac{1}{\bar{z}}, \dfrac{1}{\bar{w}}\right), \\
		g= \frac{w^k(z+1)}{z(z-1)}, \qquad \eta = i \dfrac{(z-1)^2}{z w^k}dz.
	\end{aligned}
\end{align}
Note that $I$ has no fixed points, and $g$ and $\eta$ satisfy (\ref{eq:nonorientable}) and  (\ref{eq:liftmetric}) gives a Riemannian metric on $M_k$. We see that
\[
	\deg(g) = 2(k+1).
\]
See Table \ref{tab:data}.
\begin{table}[h]
	\centering
	\begin{tabular}{c|cccccc}
		$(z, w)$&$(-1, *)$&$(-1/r, \infty)$&$(0, 0)$&$(1, *)$&$(r, 0)$&$(\infty, \infty)$ \\ \hline 
		$g$&$0^1 \times (k + 1)$&$\infty^k$&$\infty^1$&$\infty^1 \times (k+1)$&$0^k$&$0^1$\\ 
		$\eta$&&$0^{2k}$&$\infty^{k+1}$&$0^2 \times (k + 1)$&&$\infty^{k+3}$
	\end{tabular}
\caption{Orders of zeros and poles of $g$ and $\eta$}\label{tab:data}
\end{table}

\begin{Rem}
	In the case $k=1$, changing the coordinate $\zeta=1/z$ and $\omega=i w/z$ in $\overline{M_k}$ and rotating $-\pi/2$ about $x_3$-axis, the Riemann surface $\overline{M_1}$ and the antiholomorphic involution $I$ and $g$ and $\eta$ become
	\begin{gather*}
		\overline{M_1}  = \left\{(\zeta, \omega)\in \Rs^2 \; \left|\; \omega^{2}=\dfrac{\zeta(r\zeta-1)}{\zeta + r}\right.\right\}, \qquad I(\zeta, \omega) = \left(-\frac{1}{\bar{\zeta}}, -\frac{1}{\bar{\omega}}\right),\\
		g = \omega \frac{\zeta+1}{\zeta-1}, \qquad \eta = i\frac{(\zeta-1)^2}{\zeta^2\omega}d\zeta.
	\end{gather*}
	They coincide with the Weierstrass data of the maximal Klein bottle with one end constructed in \cite{FuL}.
\end{Rem}

\begin{Thm}[Existence]\label{th:ex}
For each $k \geq 1$, there are exactly two real values $r_1, r_2 \in \R\bash\{0\}$ for which the maxface
	\[
		f = \Re\int_{z_0}^{z} \Phi : M_k \to \L^3
	\]
is well-defined and induces a one-ended complete nonorientable maxface $f' : M_k / \langle I \rangle \to \L^3$ of genus $k + 1$, where $\Phi$ is the $\C^3$-valued holomorphic differential given by $(\ref{eq:wr})$.
See Figure $\ref{fig:hgn}$.
\end{Thm}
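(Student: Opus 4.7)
The plan is to exploit the symmetries of $M_{k}$ and of the data~\eqref{aligned:wd} to reduce the period problem~\eqref{eq:period01}--\eqref{eq:period02} to a single real equation in~$r$, and then to show that this equation has exactly two solutions. A direct computation from~\eqref{aligned:wd} yields
\[
 g\,\eta \;=\; i\!\left(1 - \frac{1}{z^{2}}\right) dz,
\]
which is the $\pi$-pullback of a meromorphic differential on $\Rs$ whose residues at $z = 0$ and $z = \infty$ vanish, where $\pi : M_{k} \to \Rs$, $(z,w) \mapsto z$. Hence $\oint_{\gamma} g\,\eta = 0$ for every $\gamma \in H_{1}(M_{k},\Z)$, so~\eqref{eq:period02} is automatically satisfied and only~\eqref{eq:period01} requires attention.

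To reduce~\eqref{eq:period01}, I would use the antiholomorphic involution $I$ of~\eqref{aligned:wd} to split the problem. From~\eqref{eq:nonorientable} one derives $I^{*}(g^{2}\eta) = \overline{\eta}$, and therefore
\[
 \overline{\oint_{\gamma}\eta} \;=\; \varepsilon \oint_{\gamma} g^{2}\eta \qquad \text{whenever } I_{*}\gamma = \varepsilon\gamma,\ \varepsilon \in \{\pm 1\}.
\]
So cycles in the $(-1)$-eigenspace of $I_{*}$ satisfy~\eqref{eq:period01} automatically, and only the $(+1)$-eigenspace contributes nontrivially. I would also use the deck transformation $\sigma(z,w) = (z, \zeta w)$ of the cyclic cover $\pi$, where $\zeta = e^{2\pi i/(k+1)}$: a direct check gives $\sigma^{*}\eta = \zeta\,\eta$, $\sigma^{*}(g^{2}\eta) = \zeta^{-1} g^{2}\eta$, and $I\circ\sigma = \sigma\circ I$. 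These transformation laws force the periods along cycles in different $\sigma$-orbits either to vanish or to coincide, so that~\eqref{eq:period01} collapses to a single condition along one essential $I$-invariant cycle~$\gamma_{0}$, namely a lift of a loop surrounding the branch segment from $0$ to $r$ in the $z$-plane.

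The resulting equation takes the form $P_{k}(r) = 0$ for a real integral $P_{k}$ that can be written out explicitly from the Weierstrass data. The main obstacle is then to prove that $P_{k}$ has exactly two zeros in $\R\bash\{0,1\}$. My plan is to (i) compute the limits of $P_{k}(r)$ as $r$ approaches $0^{\pm}$, $\pm 1$, and $\pm\infty$, and thereby identify the subintervals on which $P_{k}$ changes sign; (ii) establish monotonicity of $P_{k}$ on each such subinterval, either by differentiating under the integral sign and applying a Cauchy--Schwarz-type estimate, or by recognising $P_{k}(r)$ as a ratio of classical beta or hypergeometric functions whose monotonicity is known; (iii) combine (i) and (ii) with the intermediate value theorem to conclude that $P_{k}$ has exactly two real zeros, denoted $r_{1}$ and $r_{2}$.

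Finally, for each admissible $r = r_{i}$ it remains to verify that $f$ is well-defined on $M_{k}$ and, by~\eqref{eq:nonorientable}, descends to a nonorientable maxface $f' : M_{k}/\langle I\rangle \to \L^{3}$; that the two punctures $(0,0)$ and $(\infty,\infty)$---interchanged by $I$---give a single complete end; and that the nonorientable genus of the quotient equals $k+1$, which follows from $\chi(M_{k}) = -2k$ together with the classification of compact nonorientable surfaces. The monotonicity step~(ii) in the analysis of $P_{k}$ is the heart of the argument.
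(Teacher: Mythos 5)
Your overall architecture matches the paper's: dispose of (\ref{eq:period02}) by observing that $g\eta$ is exact, use the involution $I$ and the cyclic symmetry to reduce (\ref{eq:period01}) to a single real equation $p(r)=0$, and then count roots. However, there are two genuine gaps. First, the cycle you reduce to does not exist on the surface: a loop in the $z$-plane enclosing the branch segment $[0,r]$ has monodromy $\zeta^{2}$ on the $(k+1)$-cyclic cover, since both $0$ and $r$ are branch points at which $w$ is multiplied by $\zeta=e^{2\pi i/(k+1)}$; for $k\geq 2$ this is nontrivial, so such a loop has no closed lift to $\overline{M_k}$. The liftable generators are a loop winding once around $\{0,-1/r\}$ and a loop winding $+1$ around $0$ and $-1$ around $r$, and the reduction (via $I_{*}\gamma_1=\gamma_1$ and $I_{*}\gamma_2=\gamma_1-\gamma_2+(\kappa_1)^k_{*}\gamma_1$) lands on the former, producing an integral collapsed onto $[-1/r,0]$, not onto $[0,r]$.

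Second, and more seriously, the heart of the theorem --- that $p$ has \emph{exactly} two zeros on $\R\bash\{0,1\}$ --- is left as a wish. Your steps (i)--(iii) only control zeros on subintervals where a sign change has been detected; since $p<0$ near $0^{\pm}$, at $1$, and at $\pm\infty$, nothing in your scheme excludes an extra pair of zeros in, say, $(1,\infty)$ or on the negative axis. The actual argument requires three nontrivial ingredients: an a priori localization of every root to an explicit interval $(r_0^{-},r_0^{+})\subset(0,\infty)$, coming from positivity of $A_0=\int_0^1 dt/W$ and $A_1=\int_0^1 t\,dt/W$ together with the sign of the quadratic $a_0(r)$; integration-by-parts identities expressing $A_0'$ and $A_1'$ back in terms of $A_0,A_1$, so that at any root $r_0$ the sign of $p'(r_0)$ equals the sign of an explicit quartic $Q_1(r_0)$; and a sign analysis of $Q_1$ (plus a second quartic $Q_2$ controlling $p''$ to exclude the degenerate case $Q_1(r_0)=0$). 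A Cauchy--Schwarz estimate or an appeal to known monotonicity of hypergeometric ratios does not substitute for this: $p(r)=|r|^{-2k/(k+1)}\left(a_0A_0-a_1A_1\right)$ with $a_0$ itself a quadratic in $r$, and $p$ is not monotone on $(0,1)$ (it is negative near $0$, positive at $k/(k+1)$, and negative at $1$). Until this root-counting analysis is supplied, the ``exactly two'' in the statement is unproved.
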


\begin{figure}[htbp]
\centering
	\begin{tabular}{c}
		\begin{minipage}{0.23\hsize}
			\centering
			\includegraphics[width=2.8cm]{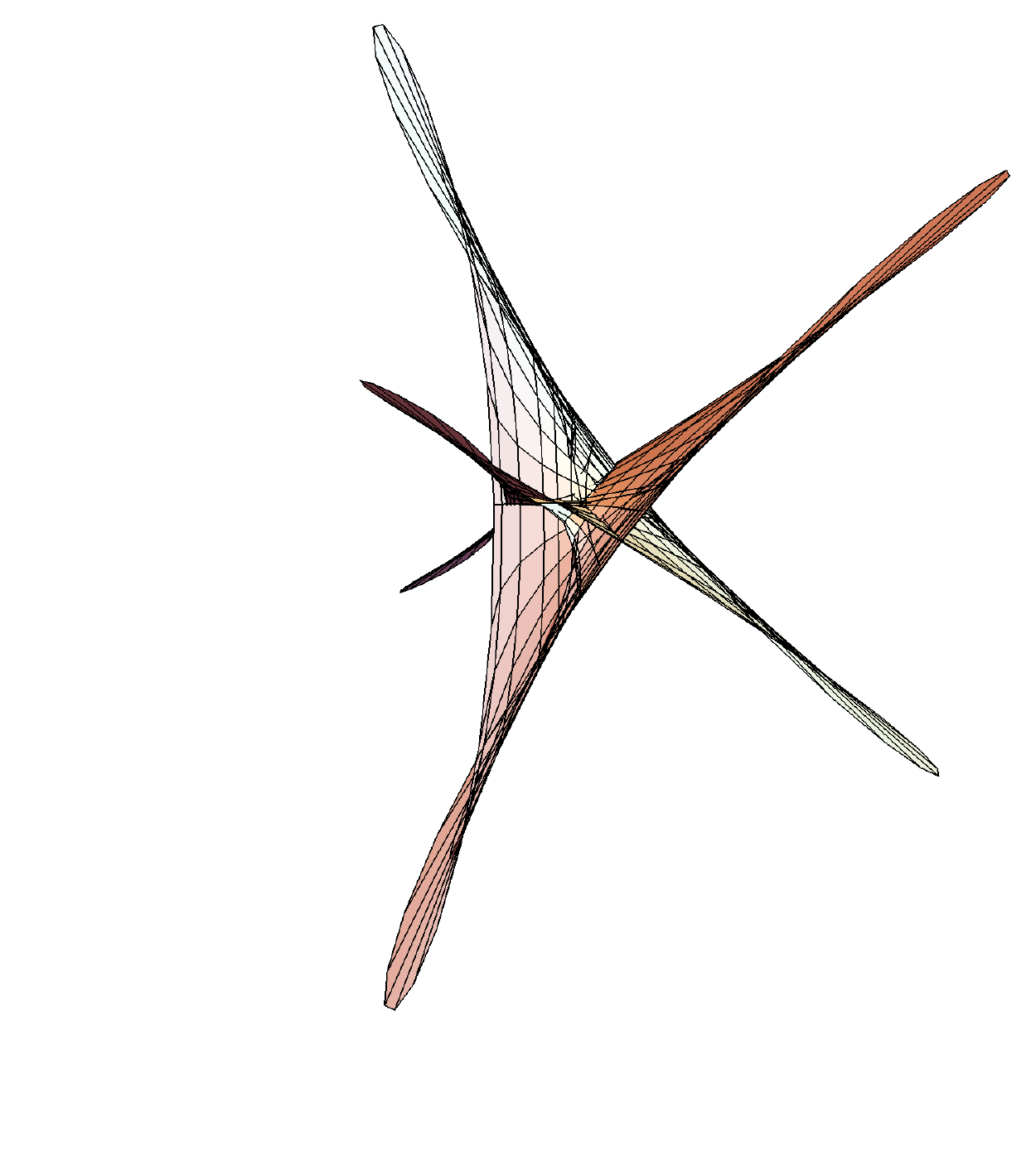}
		\end{minipage}
		
		\begin{minipage}{0.23\hsize}
			\centering
			\includegraphics[width=2.8cm]{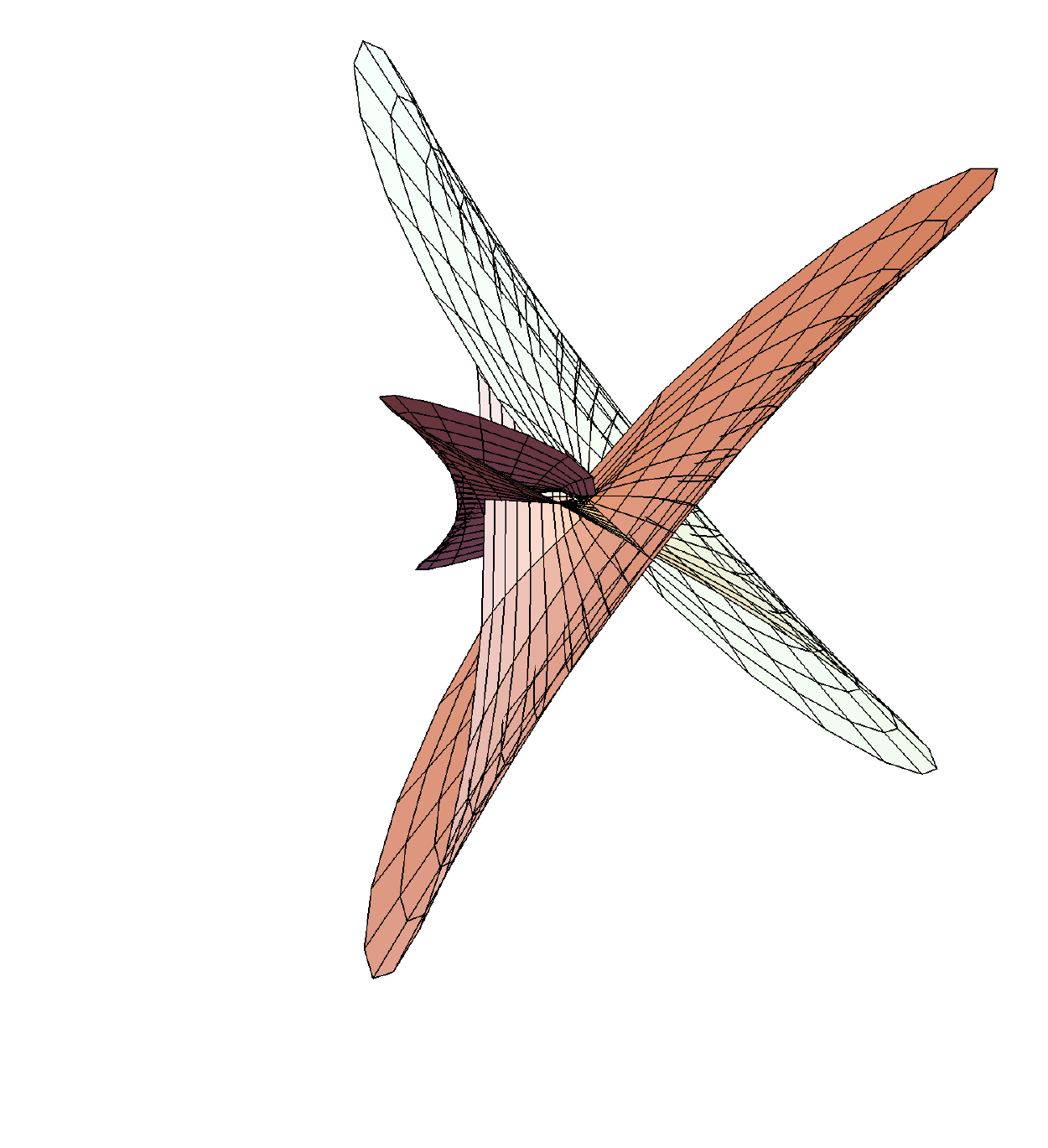}
		\end{minipage}
		
		\begin{minipage}{0.23\hsize}
			\centering
			\includegraphics[width=2.8cm]{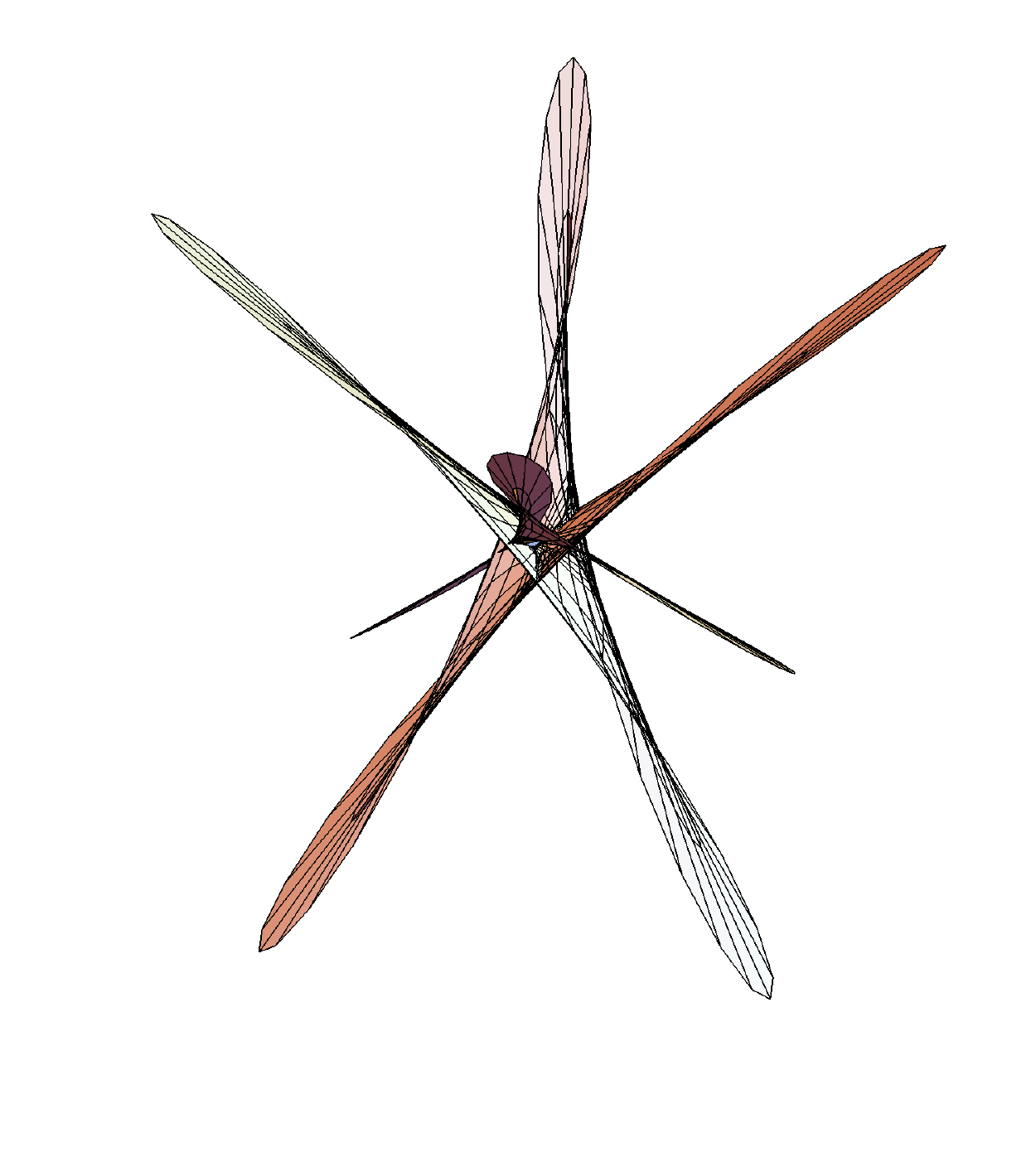}
		\end{minipage}
		
		\begin{minipage}{0.23\hsize}
			\centering
			\includegraphics[width=2.8cm]{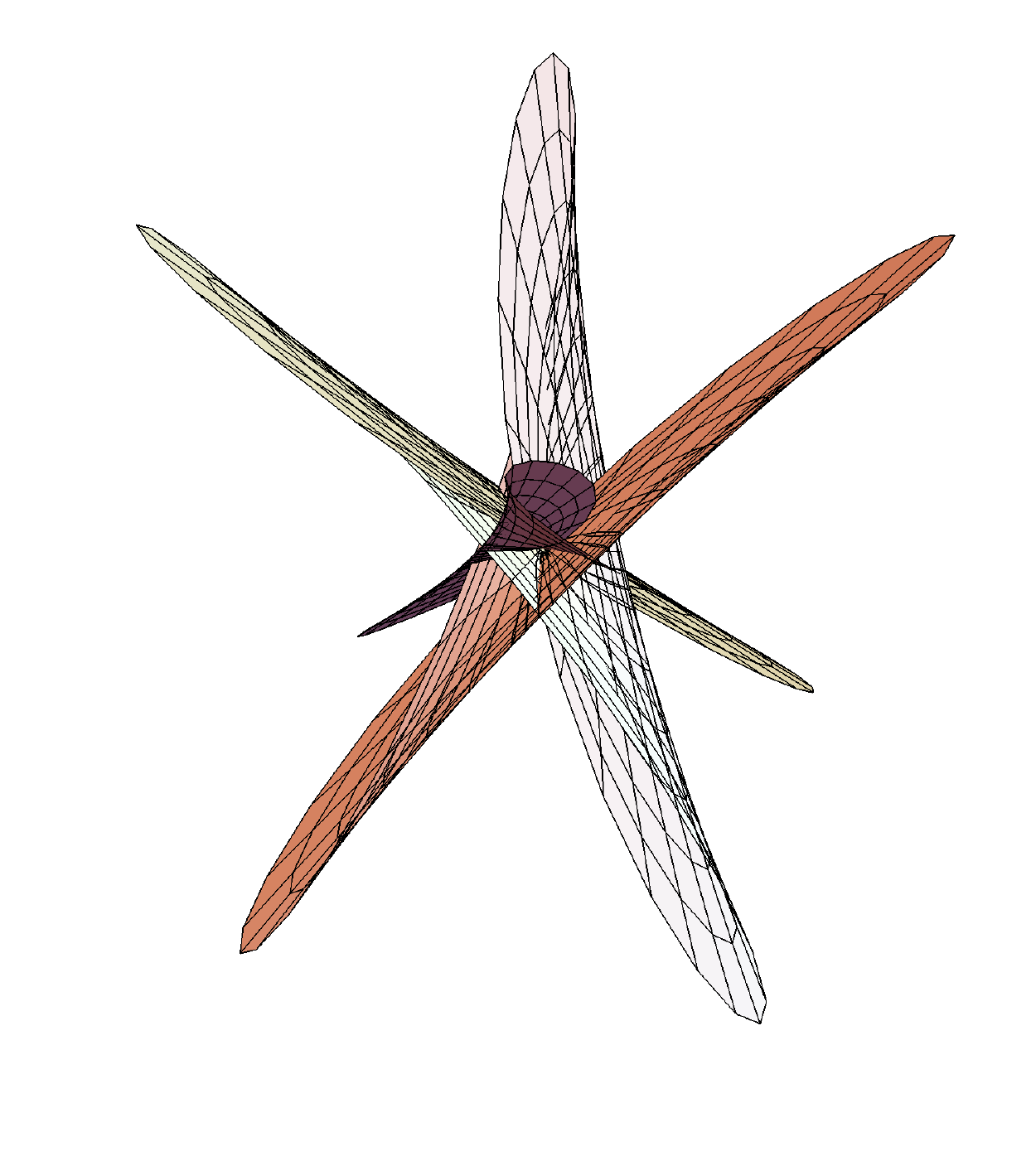}
		\end{minipage}
\\
		\begin{minipage}{0.23\hsize}
			\centering
			\includegraphics[width=2cm]{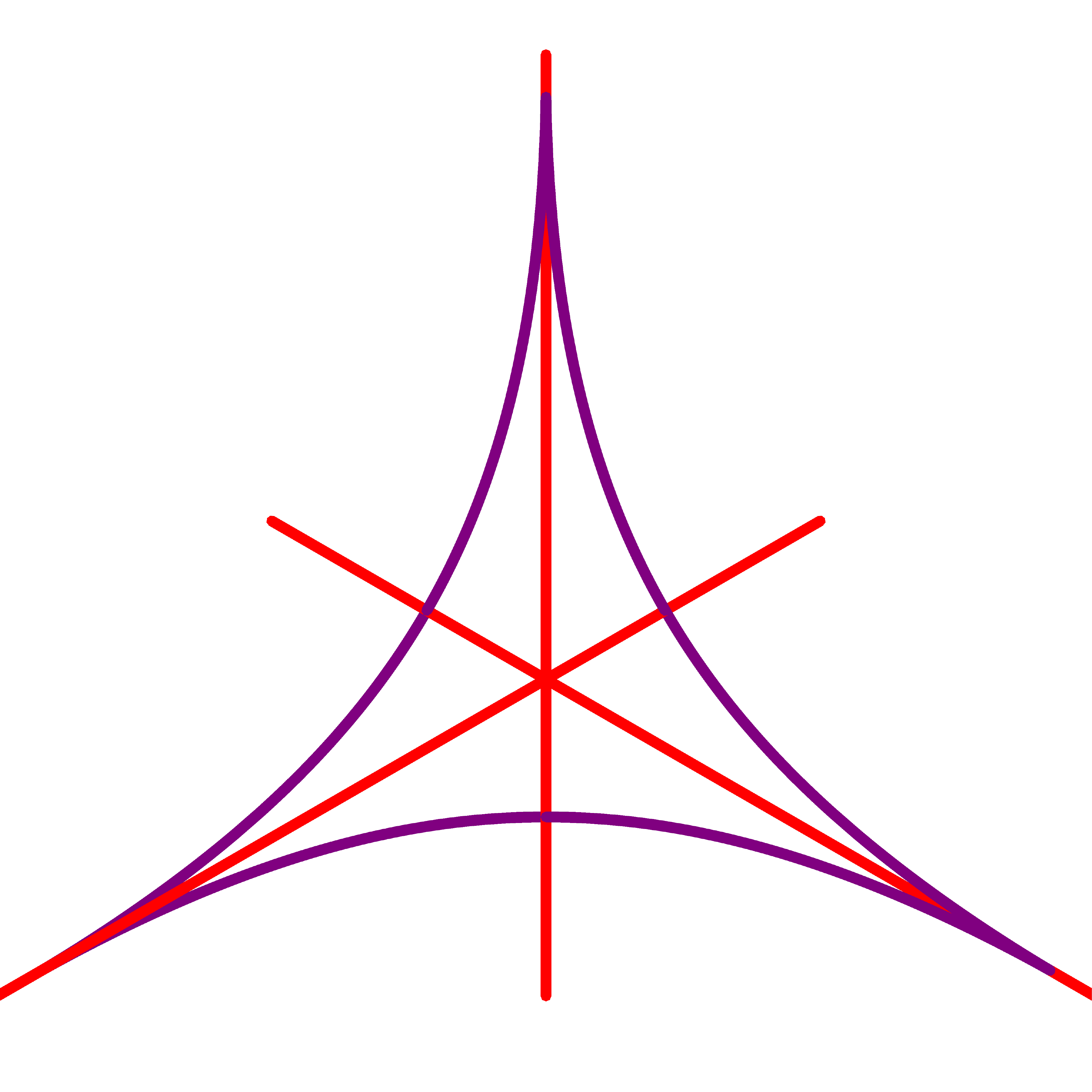}
		\end{minipage}

		\begin{minipage}{0.23\hsize}
			\centering
			\includegraphics[width=2cm]{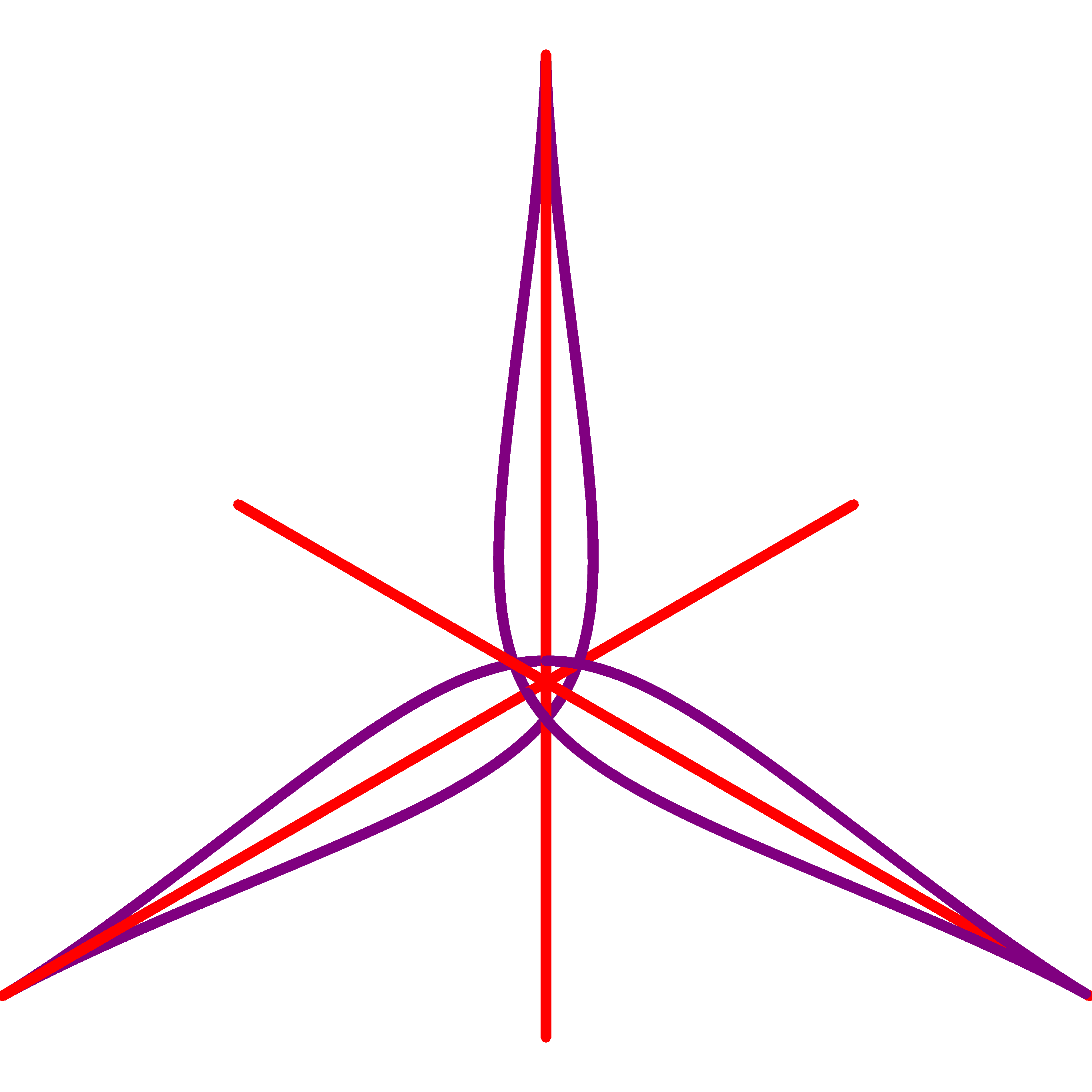}
		\end{minipage}

		\begin{minipage}{0.23\hsize}
			\centering
			\includegraphics[width=2cm]{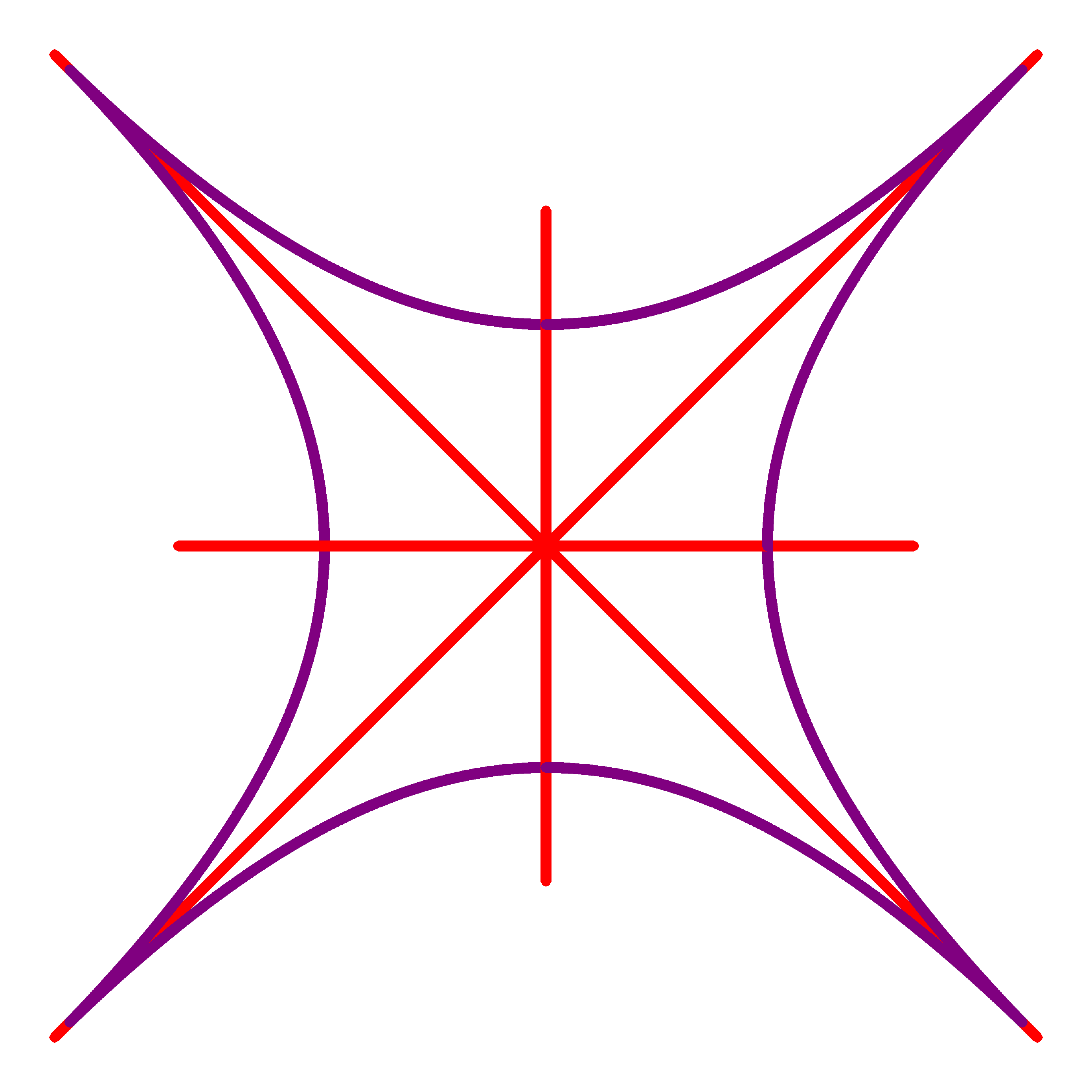}
		\end{minipage}
		
		\begin{minipage}{0.23\hsize}
			\centering
			\includegraphics[width=2cm]{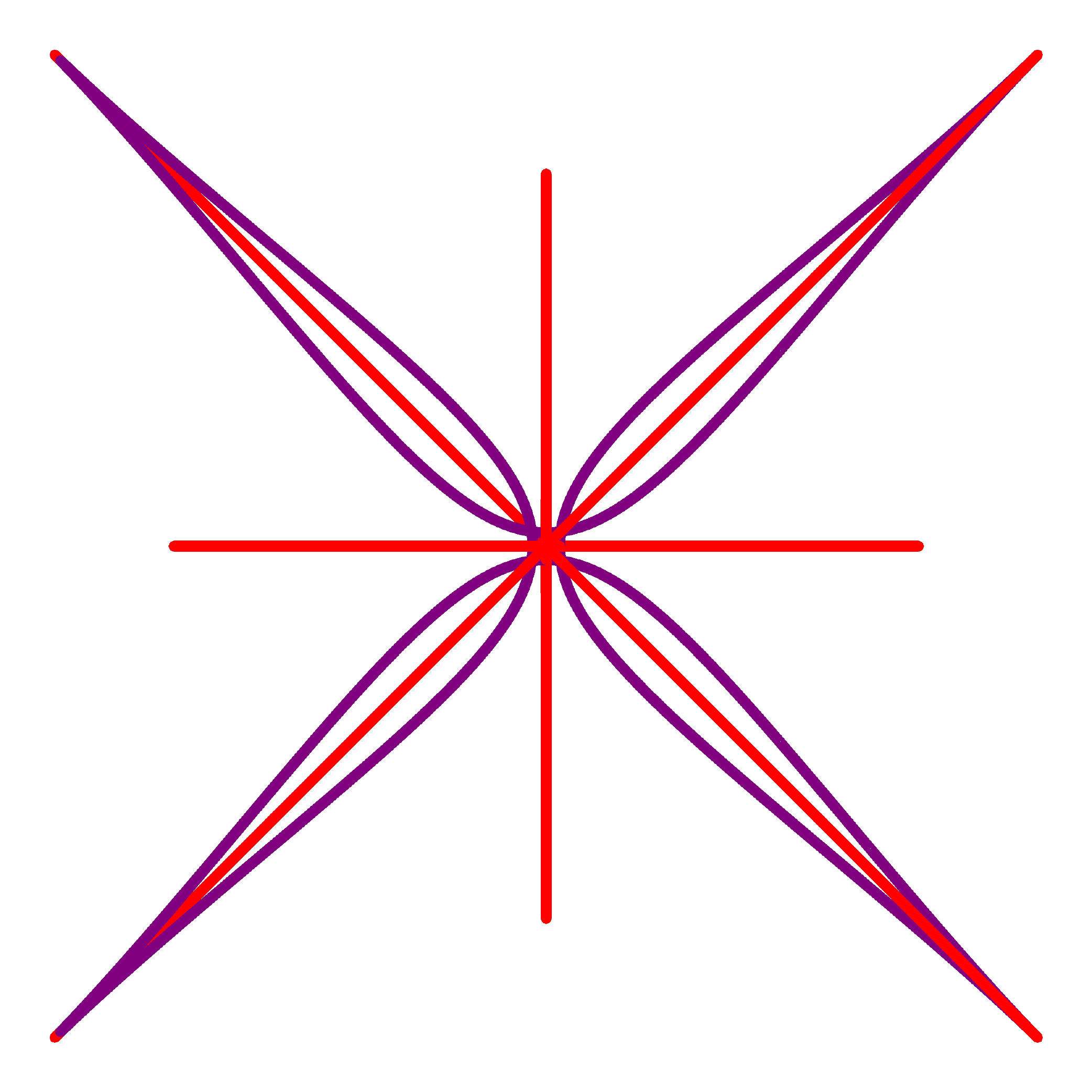}
		\end{minipage}
\\
		\begin{minipage}{0.4\hsize}
			\centering
			$k=2$, Left: $r \approx 0.478169$, Right: $r \approx 0.807158$.
		\end{minipage}
		
		\begin{minipage}{0.4\hsize}
			\centering
			$k=3$, Left: $r \approx 0.615965$, Right: $r \approx 0.859345$.
		\end{minipage}
	\end{tabular}
\caption{Nonorientable maximal surfaces of genus $k+1$ with one end (top) and their intersection with the $(x_1,x_2)$-plane (bottom).}\label{fig:hgn}
\end{figure}

\begin{proof}
Once we solve the period problem for some $r$, the arising immersion is clearly a complete maxface;
just observe that $|g|\neq1$ at the two ends of $M_k$.
In the case $k = 1$, the proof is given in \cite{FuL}. Here we consider the case $k \geq 2$. $f$ is well-defined if and only if $\phi_j$, $j=1, 2, 3$, have no real periods on loops in $M_k$. 
We first observe that 
	\begin{equation}\label{eq:geta}
		\phi_{3} = 2 g \eta = d\left(\frac{2i(z^2 + 1)}{z}\right)
	\end{equation}
is an exact 1-form on $\overline{M_k}$ and (\ref{eq:period02}) is satisfied.
It is easy to check that $\phi_1, \phi_2$ have no residues at the topological ends of $M_k$ (see (\ref{eq:exp})). 
So, it suffices to prove (\ref{eq:period01}) for any homology basis of $\overline{M_k}$.

On the other hand, for any loop $\gamma$ in $\overline{M_k}$,
	\[
		\oint_{\gamma} \phi_j = \oint_{I_{*}(\gamma)} I^{*}(\phi_j) = \oint_{I_{*}(\gamma)}\overline{\phi_j}
	\]
and so
	\[
		2 \Re \oint_{\gamma} \phi_j = \oint_{\gamma + I_{*}(\gamma)} \phi_j.
	\]	

Therefore, $f$ is well-defined on $M_k$ if and only if 
	\begin{equation}\label{eq:nonori_period}
		\oint_{\gamma + I_{*}(\gamma)} \phi_j = 0, \quad \gamma \in H_1(\overline{M_k}, \Z), \quad j=1, 2.
	\end{equation}
	
Define conformal maps $\kappa_j : \overline{M_k} \to \overline{M_k}$ $(j = 1,2)$ as follows:
	\begin{equation}\label{eq:k1k2}
		\kappa_1(z, w) = (z, e^{\frac{2\pi i}{k+1}}w), \qquad \kappa_2(z, w) = (\bar{z}, \bar{w}).
	\end{equation}
Then we have the following:
	\begin{equation}\label{eq:sym}
		\kappa_1^*\Phi = K_1 \Phi, \qquad  
		\kappa_2^*\Phi = K_2\overline{\Phi},
	\end{equation}
where
	\begin{equation}\label{eq:K}
		K_1 =
		\begin{pmatrix}
			\cos \frac{2\pi}{k+1} & \sin \frac{2\pi}{k+1} & 0\\[4pt]
			-\sin \frac{2\pi}{k+1} & \cos \frac{2\pi}{k+1} & 0\\[4pt]
			0&0&1
		\end{pmatrix},
		\qquad  K_2 =
		\begin{pmatrix}
			-1 &0 &0\\[4pt]
			0 &1 &0\\[4pt]
			0 &0 &-1
		\end{pmatrix}.
	\end{equation}
Let $\gamma_1$ and $\gamma_2$ be two loops in $\overline{M_k}$ whose projections to the $z$-plane are illustrated in Figure \ref{fig:homologicalbasis} as in \cite{LM}. The winding number of $\gamma_1$ around $0, -1/r$ is 1. The winding number of $\gamma_2$ around 0 is 1, around $r$ is $-1$. The set 
	\[
		\{ (\kappa_j)^{m}_{*}(\gamma_l) \; | \; j, l \in\{1, 2\}, m \in \{1,\dots, k+1\} \}
	\]
contains a homology basis of $\overline{M_k}$.

	\begin{figure}[htbp]
	\centering
	\includegraphics[width=9.0cm]{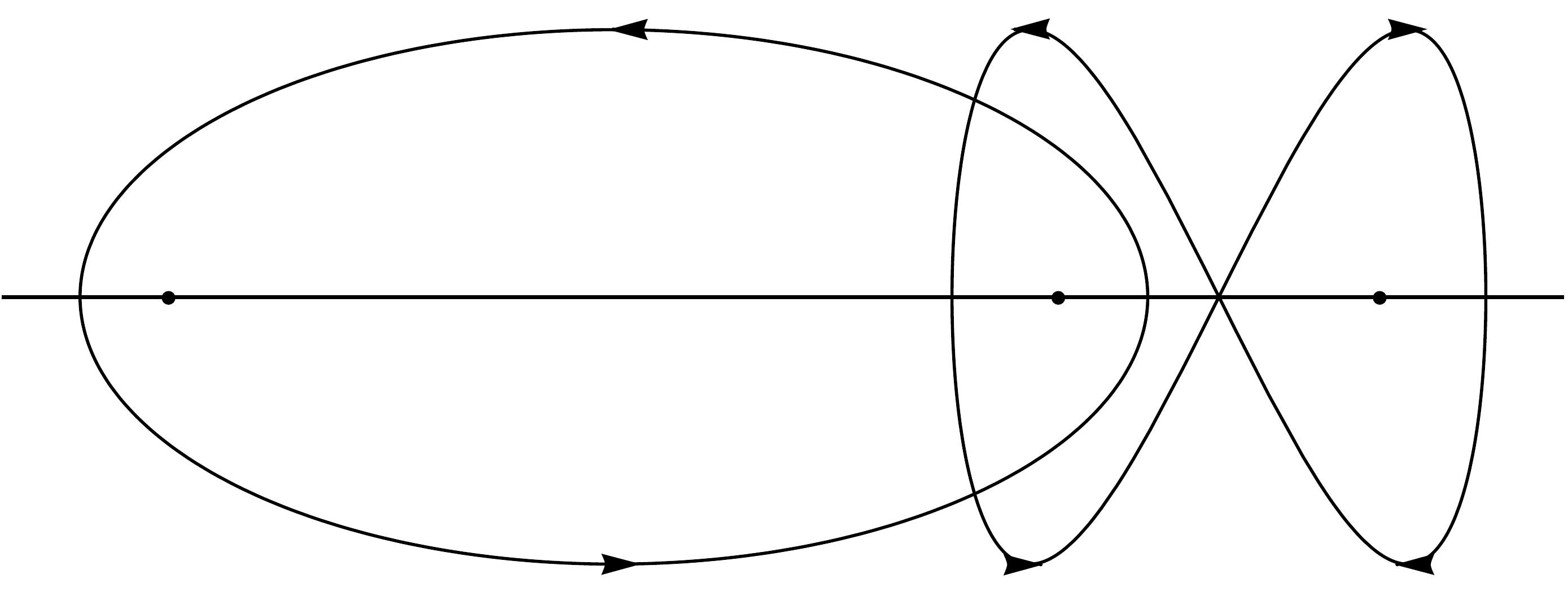}
 		\begin{picture}(0,0)
 		\put(-230,40){\makebox(0,0)[cc]{\footnotesize$-1/r$}}
 		\put(-90,40){\makebox(0,0)[cc]{\footnotesize$0$}}
 		\put(-30,40){\makebox(0,0)[cc]{\footnotesize$r$}}
 		\put(-152,100){\makebox(0,0)[cc]{\footnotesize$\gamma_1$}}
 		\put(-54,100){\makebox(0,0)[cc]{\footnotesize$\gamma_2$}}
 		\end{picture}
	\caption{Projection to the $z$-plane of $\gamma_1, \gamma_2 \in H_1(\overline{M_k}, \Z)$.}\label{fig:homologicalbasis}
	\end{figure}

Moreover, a straightforward computation gives that
	\[
		I_{*}(\gamma_1) = \gamma_1, \qquad I_{*}(\gamma_2) = \gamma_1 - \gamma_2 + (\kappa_1)^k_{*}(\gamma_1).
	\]
Therefore,  (\ref{eq:nonori_period}) is equivalent to 
	\[
		\oint_{\gamma_1} \phi_j = 0 \qquad (j = 1, 2).
	\]
In other words, $f$ is well-defined on $M_k$ if and only if
	\[
		\oint_{\gamma_1} (1 + g^2)\eta = \oint_{\gamma_1}(1 - g^2)\eta = 0
	\]
holds, that is to say, 
	\[
		\oint_{\gamma_1} \eta = \oint_{\gamma_1}g^2\eta = 0
	\]
holds. However,
	\[
		\oint_{\gamma_1} \eta = \oint_{I_{*}(\gamma_1)} I^{*}\eta =\oint_{\gamma_1}\overline{g^2\eta},
	\]
and hence $f$ is well-defined on $M_k$ if and only if
	\[
		\oint_{\gamma_1} g^2\eta = \oint_{\gamma_1} \frac{w^k(z+1)^2}{z^3}dz = 0.
	\]
We take the exact 1-form $dF$, where
	\begin{align*}
		F &= \frac{(k+1)(z-r)(2rz^2-((k+1)r^2-2(k+2)r+k)z+r)}{(k+2)rwz}.
	\end{align*}
Then we have
	\[
		\dfrac{w^k(z+1)^2}{z^3}dz + dF = \dfrac{a_0+a_1rz}{(k+2)rw}dz,
	\]
	where
	\[
		a_0 = -(k+1)(k+2)r^2+2k(k+2)r-k(k-1), \qquad a_1 = 2(2k+1).
	\]
Since the right-hand side is a holomorphic differential on $M_k$, the loop $\gamma_1$ can be collapsed over the interval $[-1/r, 0]$ by the Stokes theorem. 
Thus $f$ is well-defined on $M_k$ if and only if
	\begin{equation}\label{eq:p1}
		p(r) :=  \int_{-1/r}^{0} \frac{a_0 + a_1 r z}{r |w|} dz = 0.
	\end{equation}
Changing the coordinate $z = -t/r$, the equation (\ref{eq:p1}) becomes
	\begin{equation}\label{eq:p2}
		p(r) = |r|^{\frac{-2k}{k+1}} \int_{0}^{1}\frac{a_0 - a_1 t}{W(t)} dt, 
	 \end{equation}
where
	\begin{equation}\label{eq:w}
		W(t) = \sqrt[k+1]{\frac{t(t+r^2)}{1-t}}.
	\end{equation}
Straightforward computations give that
	\begin{align*}
		&p_+(0):=\lim_{r>0, r\to 0} p(r) = -\infty, \\
		&p(+\infty) := \lim_{r \to \infty} p(r) = - (k+1)(k+2) \int_{0}^{1}\sqrt[k+1]{\dfrac{1-t}{t}} dt < 0, \\
		&p_-(0):=\lim_{r<0, r \to 0} p(r) = -\infty, \\
		&p(-\infty) := \lim_{r \to -\infty} p(r) = - (k+1)(k+2) \int_{0}^{1}\sqrt[k+1]{\dfrac{1-t}{t}} dt <0.
	\end{align*}
	
We want to prove that $p(k/(k+1)) >0$ for any $k \geq 2$. It is clear that
	\[
		p(k/(k+1)) = \frac{(k + 1)(2k + 1)}{k} \int_{0}^{1}\frac{1 - 2(1 + 1/k)t}{\sqrt[k + 1]{t(1+(1+1/k)^2t)/(1-t)}}dt.
	\]
Define the function $q(k)$ by
	\[
		q (k) := \int_{0}^{1} \frac{1-2(1 + 1/k)t}{V(t)}dt, 
	\]
where
	\[
		V(t)=\sqrt[k+1]{\frac{t(1 + (1 + 1/k)^2t)}{1-t}}.
	\]
A straightforward computation gives that
	\[
		 \frac{1-2(1 + 1/k)t}{V(t)}dt - d\left(\frac{(k+1)t(1-t)}{kV(t)}\right) = \frac{(k+1)^2 (1-t)t}{k\left((k+1)^2t+k^2\right)V(t)}dt.
	\]
Hence we see that
	\[
		q(k)=\int_{0}^{1} \frac{1-2(1 + 1/k)t}{V(t)}dt=\int_{0}^{1} \frac{(k+1)^2 (1-t)t}{k\left((k+1)^2t+k^2\right)V(t)}dt > 0.
	\]
Therefore, we get $p(k/(k + 1)) > 0$ for any $k \geq 2$. It is clear that
	\[
		p(1) = -2 \int_{0}^{1} \frac{(2k+1)t-k+1}{U(t)}dt,
	\]
where 
	\[
		U(t) = \sqrt[k+1]{\frac{t(1+t)}{1-t}}.
	\]
Since 
	\[
		\frac{(2k+1)t-k+1}{U(t)}dt + d\left( \frac{(k^2-1)t(1-t)}{k U(t)}\right) = \frac{t((2k+1)t+3)}{k(1+t)U(t)}dt,
	\]
it is easy to check 
	\[
		p(1) = -2 \int_{0}^{1} \frac{(2k+1)t-k+1}{U(t)}dt = -2 \int_{0}^{1} \frac{t((2k+1)t+3)}{k(1+t)U(t)}dt < 0.
	\]
As a consequence, $p$ has at least two roots in $(0, 1)$.

Let us show that $p$ has exactly two real roots on $\R\bash\{0, 1\}$. 

Define $A_0, A_1 : \R \to \R_{>0}$ by
	\[
		A_0(r) = \int_{0}^{1} \frac{dt}{W(t)}, \qquad A_1(r) = \int_{0}^{1} \frac{t}{W(t)}dt,
	\]
where $W(t)$ is given in (\ref{eq:w}). 
It is clear that (\ref{eq:p2}) becomes
	\[
		p(r) = |r|^{\frac{-2k}{k+1}}(a_0 A_0 - a_1 A_1).
	\]
Suppose that $r_0 \in \R$ satisfies $p(r_0) = 0$, then
	\[
		A_1(r_0)=\frac{a_0(r_0)}{a_1(r_0)}A_0(r_0) = -\frac{(k+1)(k+2)r_0^2-2k(k+2)r_0+k(k-1)}{2(2k+1)}A_0(r_0),
	\]
hence we see that
	\begin{equation}\label{eq:r0}
		0 \leq \frac{k(k+2)-\sqrt{k(k+2)(2k+1)}}{(k+1)(k+2)} < r_0 < \frac{k(k+2)+\sqrt{k(k+2)(2k+1)}}{(k+1)(k+2)}.
	\end{equation}
Therefore, we can conclude $r_0 \in (0, \infty)$. On the other hand, 
	\[
		A_0'(r) = -\dfrac{2r}{k+1}\int_{0}^{1}\dfrac{dt}{(t+r^2)W(t)}, \qquad A_1'(r) = -\dfrac{2r}{k+1}\int_{0}^{1}\dfrac{t}{(t+r^2)W(t)}dt.
	\]
Since 
	\begin{align*}
		& -\dfrac{2r}{(k+1)(t+r^2)W(t)}dt + d\left(\dfrac{2t(1-t)}{r(r^2+1)W(t)}\right) \\
		&\phantom{-}= \frac{2\left(-1 + k - r^2\right)}{(k+1) \left(r^3+r\right)W(t)}dt-\frac{2 (2 k+1)t}{(k+1) \left(r^3+r\right)W(t)}dt
	\end{align*}
and
	\begin{align*}
		& -\dfrac{2rt}{(k+1)(t+r^2)W(t)}dt - d\left(\dfrac{2rt(1-t)}{(r^2+1)W(t)}\right) \\
		&\phantom{-}= -\frac{2kr}{(k+1)(r^2+1)W(t)}dt+\frac{2r(2k +1)t}{(k+1)(r^2+1)W(t)}dt,
	\end{align*}
it is straightforward to see that
	\begin{align*}
		& p'(r)=b_0A_0+b_1A_1 \qquad (r > 0), \\
		& p''(r)=c_0A_0+c_1A_1 \qquad (r > 0),
	\end{align*}
where
	\begin{align*}
		b_0 &= \frac{2 k r^{\frac{-3k-1}{k+1}} \left(-(k+1)(k+2)r^3+(k-1)r^2+(k+2)(k-1)r+k-1\right)}{(k+1) \left(r^2+1\right)}, \\
		b_1 &= \frac{2 k (2 k+1) r^{\frac{-3k-1}{k+1}} \left((k+1)r^2-2(k+2)r+k+1\right)}{(k+1) \left(r^2+1\right)},\\
		c_0 &= \frac{2 k r^{\frac{-4k-2}{k+1}} \left(2 (k+1)^2 (k+2) r^3-(k+1) (4 k^2+5k-3)r^2\right)}{(k+1)^2 \left(r^2+1\right)}\\
		&\phantom{=}\quad +\frac{2 k r^{\frac{-4k-2}{k+1}} \left(-2 (k^2+k-2) r-(k+3)(k-1)\right)}{(k+1)^2 \left(r^2+1\right)},\\
		c_1 &= \frac{2 k (2 k+1) r^{\frac{-4k-2}{k+1}} \left((k+1)^2 r^2+2(k+1)(k+2)r-(3k^2+6k-1)\right)}{(k+1)^2 \left(r^2+1\right)}.
	\end{align*}
If $p(r_0)=0$, then $A_1(r_0) = (a_0(r_0)/a_1(r_0))A_0(r_0)$, and so
	\begin{align}
		p'(r_0) &= \frac{k (k+2) r_0^{-\frac{3k+1}{k+1}}Q_1(r_0)}{(1 + k)(r_0^2 + 1)}A_0(r_0),\\
		p''(r_0) &= \frac{k(k+2)r_0^{\frac{-4k-2}{k+1}}Q_2(r_0)}{(k+1) \left(r_0^2+1\right)} A_0(r_0),
	\end{align}
where
	\begin{align*}
		Q_1(r) &=-(k+1)^2r^4+2(k+1)(2k+1)r^3\\
		&\phantom{=} \quad -2(k+1)(3k+1)r^2+2(2k-1)(k+1)r-(k-1)^2,\\
		Q_2(r) &= -(k+1)^2 r^4+2 (k+1) (3 k+1) r^2-4 (k+1) (2 k-1) r+3 (k-1)^2.
	\end{align*}
Since $Q_1''(r) < 0$ for all $r \in \R$, $Q_1$ has at most two real roots. Straightforward computations give that
	\begin{align*}
		&Q_1(0) = -(k-1)^2 < 0, \\
		&Q_1(r_0^{-}) = \frac{4(k^2+k+1)(-1-2k+\sqrt{k(k+2)(2k+1)})}{(k+1)(k+2)^2} > 0, \\
		&Q_1\left(\frac{k}{k+1}\right) = -\frac{1+2k}{(k+1)^2} < 0,
	\end{align*}
where
	\begin{equation}\label{eq:r0-}
		r_{0}^{-} := \frac{k(k+2)-\sqrt{k(k+2)(2k+1)}}{(k+1)(k+2)}.
	\end{equation}
Hence the intermediate value theorem yields the existence of the exactly two real numbers $s_0 \in (0, r_0^{-})$ and $s_1 \in (r_0^{-}, k/(k + 1))$ such that $Q_1(s_0) = Q_1(s_1)=0$.
Moreover, $Q_1|_{(0, s_0)} < 0$, $Q_1|_{(s_0, s_1)} > 0$, and $Q_1|_{(s_1, \infty)} < 0$ hold.
As $A_0(r_0) > 0$, the sign of $p'(r_0)$ coincides with that of $Q_1(r_0)$.
Recall that $0 \leq r_{0}^{-} < r_{0}$ by (\ref{eq:r0}) and (\ref{eq:r0-}).
Since $p'(r_0) > 0$ while $r_0 \in (r_0^{-}, s_1)$, $p$ has at most one root lying in $(r_0^{-}, s_1)$.
Similarly, since $p'(r_0) < 0$ while $r_0 \in (s_1, \infty)$, $p$ has at most one root lying in $(s_1, \infty)$.

It only remains to prove $r_0 \neq s_1$ or $Q_2(s_1) < 0$ holds. 
In the case of $k=2$, 
	\begin{align*}
		Q_1(r) & = -9 r^4+30 r^3-42 r^2+18 r-1,\\
		Q_2(r) & = -9 r^4+42 r^2-36 r+3. 
	\end{align*}
A direct computation shows that 
	\begin{align*}
		s_1 & = \frac{5}{6} -\frac{1}{6} \sqrt{A}+\frac{1}{6} \sqrt{-A+\frac{62}{\sqrt{A}}-9} \approx 0.599176, \quad Q_2(s_1) \approx -4.65184 < 0,
	\end{align*}
where $A = 2 \sqrt[3]{3 \sqrt{197}+46}+2 \sqrt[3]{46-3 \sqrt{197}}-3$. 

In the case of $k\geq3$, the discriminant $D(Q_2)$ of $Q_2$ satisfies
	\[
		D(Q_2)=-256 (k+1)^6 \left(432 k^4-1448 k^3+1011 k^2-162 k+19\right). \\
	\]
A straightforward computation gives that
	\begin{align*}
		&432 k^4-1448 k^3+1011 k^2-162 k+19 \\
		&\quad = 432 (k-3)^4+3736 (k-3)^3+11307 (k-3)^2+13464 (k-3)+4528.
	\end{align*}
Therefore $D(Q_2) < 0$ and hence $Q_2$ has two real roots and two complex conjugate roots.
On the other hand, since
\begin{align*}
		Q_2(0) & = 3(k-1)^2 > 0, \\
		Q_2\left(\frac{k}{k+1}\right) & = 4 - 4k - \frac{1}{(k+1)^2} < 0,  \\
		Q_2\left(r_0^{-}\right) & = \dfrac{-4(k^5+2k^4-3k^3-7k^2-8k-3)}{(k+1)^2(k+2)^2} \\
		& \phantom{=} + \dfrac{4(k^3-2k^2-3k-2)\sqrt{k(k+2)(2k+1)}}{(k+1)^2(k+2)^2} < 0,
\end{align*}
$Q_2$ cannot have a real root in ($r_0^{-}, k/(k+1)$), and hence $Q_2(s_1) < 0$.

This proves that $p$ has exactly two real roots $r_1$ and $r_2$ on $\R \bash \{0,1\}$.
\end{proof}

By the above proof, we see that both $r_1$ and $r_2$ satisfy $r_1, r_2 \in (0, 1)$.
The values $r_1$ and $r_2$ can be estimated using the Mathematica software. 
See Figure $\ref{fig:period}$ for $k=2, 3, 4$.

\begin{figure}[htbp]
\centering
	\begin{tabular}{c}
		\begin{minipage}{0.31\hsize}
			\centering
			\includegraphics[width=3.5cm]{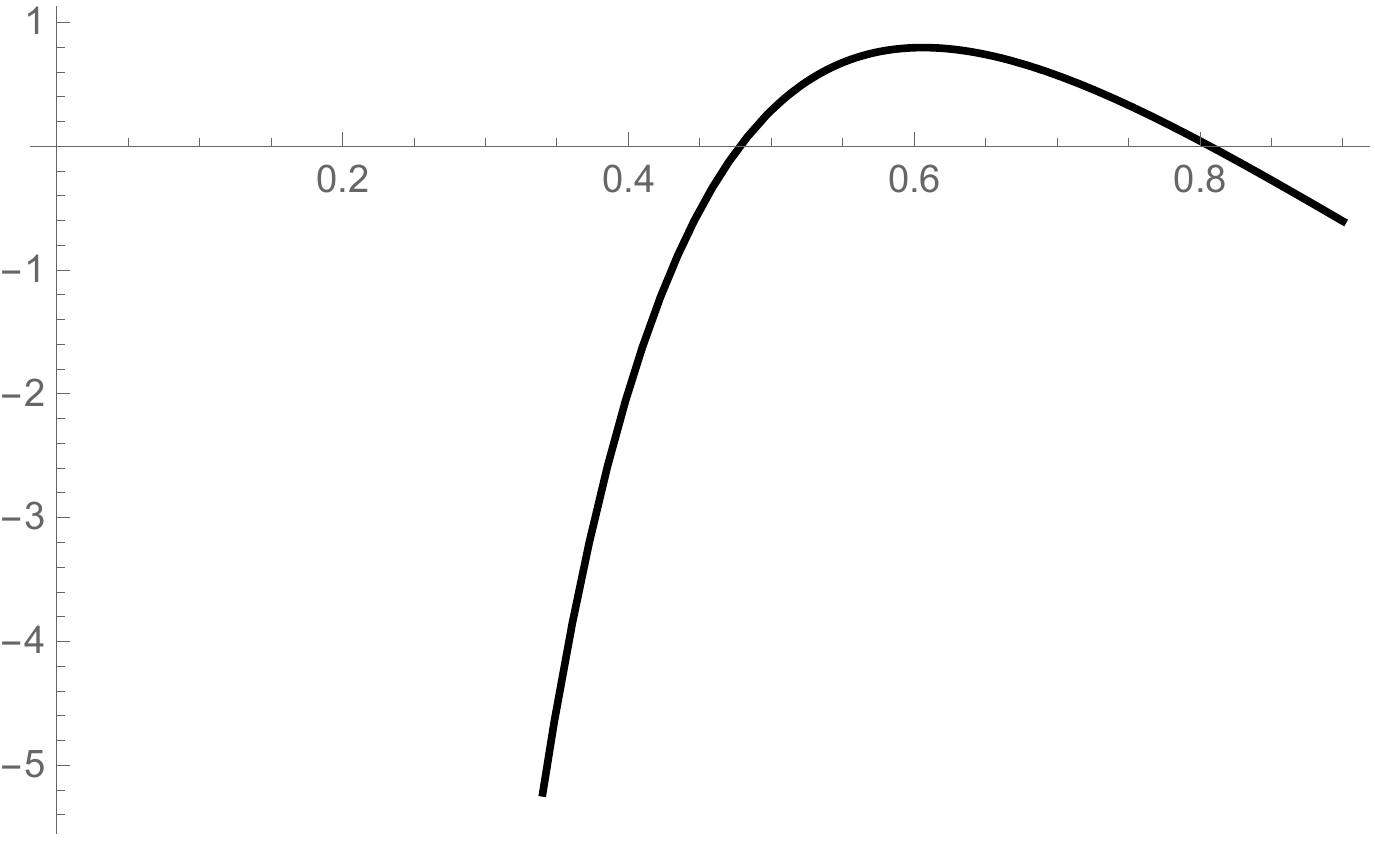}
		\end{minipage}
		
		\begin{minipage}{0.31\hsize}
			\centering
			\includegraphics[width=3.5cm]{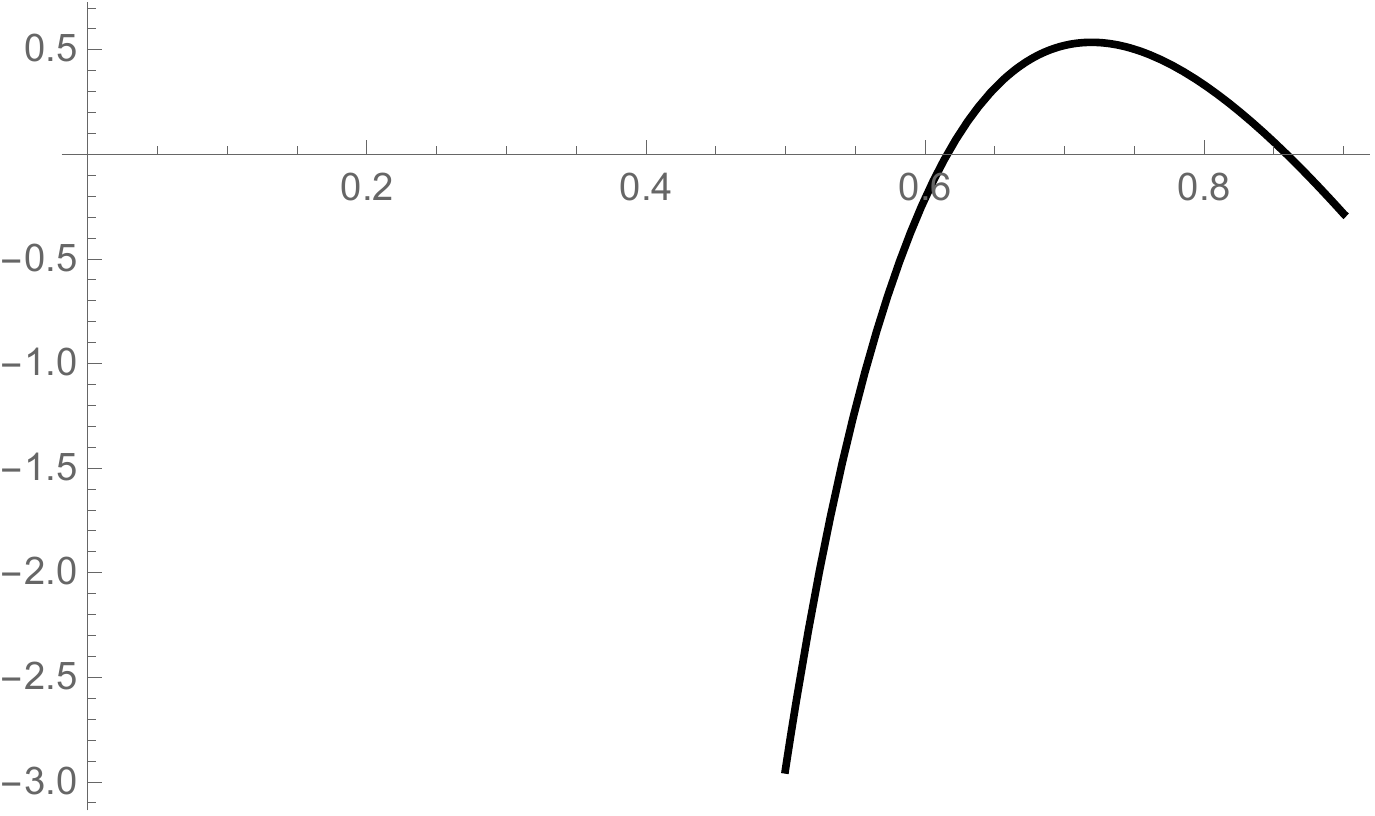}
		\end{minipage}
		
		\begin{minipage}{0.31\hsize}
			\centering
			\includegraphics[width=3.5cm]{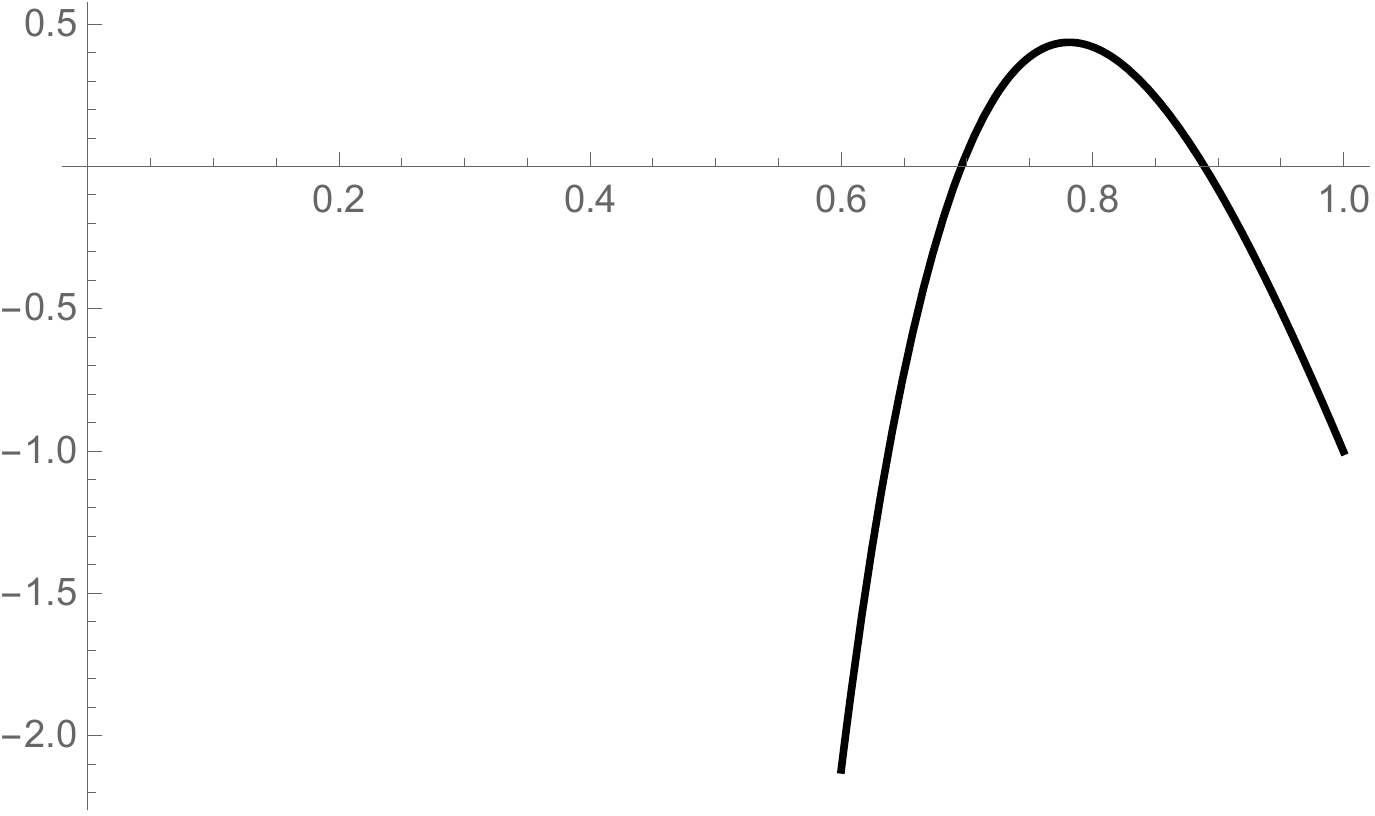}
		\end{minipage}
		\\
		\begin{minipage}{0.31\hsize}
		\centering
		$k=2$
		\end{minipage}
		
		\begin{minipage}{0.31\hsize}
		\centering
		$k=3$
		\end{minipage}
		
		\begin{minipage}{0.31\hsize}
		\centering
		$k=4$
		\end{minipage}
	\end{tabular}
\caption{The period function $p(r)$.}\label{fig:period}
\end{figure}

\begin{Rem}
For $k=2$ and $r=r_1\approx 0.478169$, we see that the singular set of $f'$ consists of 12 swallowtails, 15 cuspidal cross caps, and cuspidal edges by applying the criteria in \cite{UY} and \cite{FSUY} numerically. See Figure \ref{fig:singular}.
\begin{figure}[htbp]
	\centering
	\includegraphics[scale=0.22]{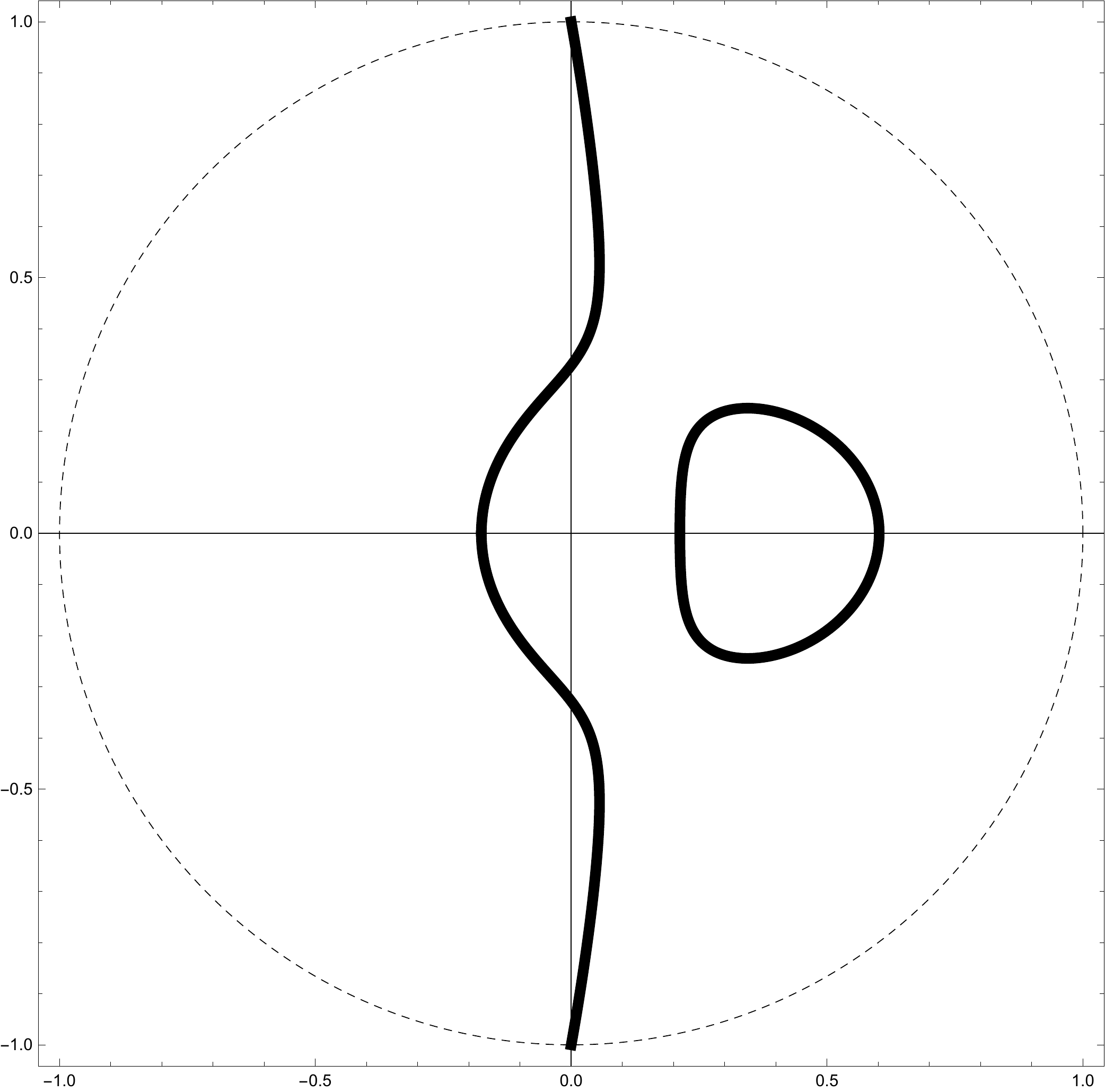}
 		\begin{picture}(0,0)
		\put(-31,67){\makebox(0,0)[cc]{\footnotesize $\bigcirc$}}
		\put(-54,67){\makebox(0,0)[cc]{\footnotesize $\bigcirc$}}
		\put(-78,67){\makebox(0,0)[cc]{\footnotesize $\bigcirc$}}
		\put(-37,79){\makebox(0,0)[cc]{\footnotesize $\bigcirc$}}
		\put(-37,55){\makebox(0,0)[cc]{\footnotesize $\bigcirc$}}
		\put(-73,55){\makebox(0,0)[cc]{\footnotesize $\bigtriangleup$}}
		\put(-74,80){\makebox(0,0)[cc]{\footnotesize $\bigtriangleup$}}
		\put(-54,76){\makebox(0,0)[cc]{\footnotesize $\bigtriangleup$}}
		\put(-54,60){\makebox(0,0)[cc]{\footnotesize $\bigtriangleup$}}
  		\end{picture}
	\caption{The singular set of $f'$ ($k=2, r=r_1$) in the unit disk of the projection to the $z$-plane of $M_k$.
	The thick curves indicate the singular points.
	$\bigcirc$ indicates a cuspidal cross cap and $\bigtriangleup$ indicates a swallowtail.
	The other singularities are cuspidal edges.}\label{fig:singular}
	\end{figure}
\end{Rem}

\section{Symmetry}\label{sec:sym}
We consider the symmetry of the surfaces constructed in Theorem \ref{th:ex}.
Let $f' : M' \to \L^3$ be the nonorientable complete maxface constructed in Theorem \ref{th:ex} with the maxface $f = f' \circ \pi : M_k \to \L^3$.
An isometry $S : M_k \to M_k$ is said to be a symmetry of $f$ if there exists a Lorentzian isometry $\tilde{S} : \L^3 \to \L^3$ such that $f \circ S = \tilde{S} \circ f$.
We denote by Sym($M_k$) the symmetry group of $M_k$. 

\begin{Prop}
The surface $f:M_k\to\mathbb{L}^3$ intersects the $(x_1,x_2)$-plane in $(k+1)$ straight lines which meet at equal angles at the origin,
and the symmetry group {\rm{Sym}}$(M_k)$ of the surface $f : M_k \to \L^3$ is equal to the dihedral group $D(k+1)$ generated by $\kappa_1$ and $\kappa_2$ in $(\ref{eq:k1k2})$.
\end{Prop}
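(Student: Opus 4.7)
The plan is to exhibit $D(k+1)$ inside ${\rm Sym}(M_k)$ from (\ref{eq:sym})--(\ref{eq:K}), identify the $k+1$ lines as fixed loci of the antiholomorphic involutions in this subgroup, and rule out further symmetries via the observation that only two of the four branch points of $\overline{M_k}\to\Rs$ carry poles of $\eta$. First, integrating (\ref{eq:sym}) gives $f\circ\kappa_j=K_j\circ f$, so $\kappa_1,\kappa_2\in{\rm Sym}(M_k)$; (\ref{eq:k1k2}) yields the dihedral relations $\kappa_1^{k+1}=\kappa_2^2={\rm id}$ and $\kappa_2\kappa_1\kappa_2=\kappa_1^{-1}$, hence $\langle\kappa_1,\kappa_2\rangle\cong D(k+1)$.

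For each $m\in\{0,\dots,k\}$ the involution $\sigma_m:=\kappa_1^m\kappa_2$ is antiholomorphic with nonempty real-analytic fixed set $F_m=\{(z,w)\in M_k:z\in\R,\ \arg w\equiv m\pi/(k+1)\pmod\pi\}$; diagonalizing its Lorentzian counterpart $K_1^mK_2$ identifies it as the half-turn about the line $L_m\subset\{x_3=0\}$ in direction $\bigl(\sin\tfrac{m\pi}{k+1},\cos\tfrac{m\pi}{k+1},0\bigr)$. Since $f\circ\sigma_m=(K_1^mK_2)\circ f$ forces $f(F_m)\subseteq L_m$, and normalizing the base point so that $f(r,0)=\mathbf{0}$ (possible because $(r,0)$ is $\kappa_1$-fixed and hence maps into the $x_3$-axis) places the origin on every $L_m$, the surface contains the claimed $k+1$ concurrent lines meeting at equal angles $\pi/(k+1)$.

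For the reverse inclusion, let $S\in{\rm Sym}(M_k)$ and extend it to $\overline{M_k}$. By Table \ref{tab:data}, $\eta$ has poles exactly at the two ends $(0,0)$ and $(\infty,\infty)$ and is regular at the remaining branch points $(r,0)$ and $(-1/r,\infty)$, so $S$ must permute $\{(0,0),(\infty,\infty)\}$; the induced (anti-)M\"{o}bius automorphism of $\Rs$ therefore preserves $\{0,\infty\}$ as a set as well as the branch locus $\{0,r,-1/r,\infty\}$. A short enumeration reduces these base maps to $\{z,-1/z,\bar z,-1/\bar z\}$, each admitting $k+1$ cyclic deck lifts, so every such $S$ lies in $\langle I,\kappa_1,\kappa_2\rangle$; modulo the kernel $\langle I\rangle$ of $S\mapsto\tilde S$ (recall $f\circ I=f$), the effective symmetry group is exactly $\langle K_1,K_2\rangle=D(k+1)$. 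The main difficulty is this enumeration: the full M\"{o}bius stabilizer of $\{0,r,-1/r,\infty\}$ is a Klein four-group that also contains the exotic swaps $(r-z)/(rz+1)$ and $(rz+1)/(z-r)$ exchanging $\{0,\infty\}\leftrightarrow\{r,-1/r\}$, and one must invoke the location of the $\eta$-poles in Table \ref{tab:data} to eliminate these and pin the symmetry group down as $D(k+1)$.
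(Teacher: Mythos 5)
Your first half is sound and close in spirit to the paper: integrating (\ref{eq:sym}) gives $f\circ\kappa_j=K_j\circ f$ (after normalizing the base point at the $\kappa_1$- and $\kappa_2$-fixed point $(r,0)$), hence $D(k+1)\subset{\rm Sym}(M_k)$, and your identification of the $k+1$ lines as the images of the fixed loci of the antiholomorphic involutions $\kappa_1^m\kappa_2$ is a legitimate (and more explicit) way to produce the concurrent lines than the paper, which essentially reads them off from the list of isometries $K_1^jK_2$.

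The reverse inclusion is where your argument has a genuine gap. You pass from an arbitrary $S\in{\rm Sym}(M_k)$, extended to $\overline{M_k}$, to ``the induced (anti-)M\"obius automorphism of $\Rs$'' without justification. For $S$ to descend along the $(k+1)$-cyclic cover $(z,w)\mapsto z$ it must normalize the deck group $\langle\kappa_1\rangle$ inside the full (anti)conformal automorphism group of $\overline{M_k}$; unlike the hyperelliptic case, this is not automatic for cyclic covers of degree $k+1\geq 3$, and nothing in your argument (the location of the $\eta$-poles, the branch locus) rules out an automorphism that mixes the $\kappa_1$-orbits. Everything downstream — the enumeration $\{z,-1/z,\bar z,-1/\bar z\}$, the count of $k+1$ lifts, the conclusion $S\in\langle I,\kappa_1,\kappa_2\rangle$ — depends on this unproved descent. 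The paper avoids the issue entirely by an analytic argument at the end: using the expansions (\ref{eq:exp}) it shows $f$ is asymptotic to the explicit model end $f_0$ with data $(1/z,\,i\,dz/z^{k+1})$, computes $f_0$ in closed form, and observes that the conformal disk $\mathcal{D}=f_0^{-1}(\{x_1^2+x_2^2-x_3^2\geq R\})\cup\{0\}$ is preserved by every symmetry and fixes $0$, so that the restriction of ${\rm Sym}(M_k)$ to $\mathcal{D}$ is forced into $D(k+1)$ by the $(k+1)$-fold rotational structure of the leading terms of $f_0$. If you want to salvage your algebraic route, you must first prove the descent, e.g.\ by showing that $S^*\phi_3=\pm\phi_3$ or $\pm\overline{\phi_3}$ and exploiting the exactness $\phi_3=d\bigl(2i(z+1/z)\bigr)$ to pin down $z\circ S$ as a M\"obius function of $z$. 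A smaller point: since $f\circ I=f$, the involution $I$ itself satisfies the paper's definition of a symmetry, so one really obtains $\langle\kappa_1,\kappa_2,I\rangle\cong D(k+1)\times\Z_2$ upstairs; your remark about quotienting by $\langle I\rangle$ addresses this, but it should be stated as a clarification of what ${\rm Sym}(M_k)=D(k+1)$ means (symmetries of the nonorientable surface $M'$, or the image group in the Lorentz group) rather than buried in the final sentence.
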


\begin{proof}
We will determine Sym$(M_k)$.
The elements of  the dihedral group $D(k+1)$ generated by $\kappa_1, \kappa_2$ in (\ref{eq:k1k2}) can be described as follows:
\begin{itemize}
	\item $K_1^j$ is rotation by $-2\pi j/(k+1)$ about the $x_3$-axis, $j=1,\dots, k+1$,
	\item  $K_1^{j}K_2$ is the reflectional symmetry with respect to the straight line $x_2 = -\tan ((k-1)j\pi/(k+1))x_1, x_3=0, j=1,\dots,k+1$,
\end{itemize}
where $K_1, K_2$ are given in (\ref{eq:K}).
Thus $D(k+1) \subset {\rm{Sym}}(M_k)$.

Next, we observe the asymptotic behavior of the ends.
At $(z,w)=(0,0)$, $w$ is a local coordinate for the Riemann surface $\overline{M_k}$, and then
\[
	z = z(w) = w^{k+1}\left(-\frac{1}{r}+\O(w^{k+1})\right).
\]
We have
\begin{equation}\label{eq:exp}
\begin{aligned}
	g &= \frac{\alpha_0}{w}+\O(w^{k}), & \eta &= \left(\frac{i \alpha_1}{w^{k+1}}+\O(1)\right)dw, \\
	g\eta &= \left( \frac{i \alpha_2}{w^{k+2}}+\O(w^{k})\right)dw, & g^2\eta &= \left( \frac{i \alpha_3}{w^{k+3}}+\frac{\alpha_4}{w^2}+\O(w^{k-1})\right)dw,
\end{aligned}
\end{equation}
where $\alpha_0, \alpha_1, \alpha_2, \alpha_3 \in \R$ and $\alpha_4 \in \C$ are constants.
Note that $g\eta$ is exact (see (\ref{eq:geta})).
This implies that the end of $f$ is asymptotic to the end of the surface determined by the Weierstrass data
\[
	(g_0, \eta_0) = \left( \frac{1}{z}, \frac{i}{z^{k+1}}dz\right) \text{\quad on \quad $\mathbb{D}_{\varepsilon}^{*}=\{z \in \C \: |\:  0< |z|<\varepsilon \}$}.
\]
Moreover, let $f_0 : \mathbb{D}_{\varepsilon}^{*} \to \L^3$ be the end of maxface with the Weierstrass data $(g_0, \eta_0)$.
Then
\begin{align*}
	f_0 &= \left(\frac{-\sin(k\theta)}{\rho^kk}-\frac{\sin(k+2)\theta}{\rho^{k+2}(k+2)}, \frac{\cos(k\theta)}{\rho^{k}k}-\frac{\cos(k+2)\theta}{\rho^{k+2}(k+2)}, \frac{-2\sin(k+1)\theta}{\rho^{k+1}(k+1)}\right)\\
	& = (x_1, x_2, x_3),
\end{align*}
where $z = \rho e^{i\theta}$.
Since
\[
	x_1^2+x_2^2-x_3^2 = \frac{1}{(k+2)^2\rho^{2(k+2)}}+\frac{1}{\rho^{2k+2}}\left(\frac{-2\cos2\theta}{k(k+2)}+\frac{4\sin^2(k+1)\theta}{(k+1)^2}+\O(\rho^2)\right),
\]
we have 
 \[
 	\mathcal{D} = f_0^{-1}(\{(x_1, x_2, x_3)\in\L^3 \: | \: x_1^2+x_2^2-x_3^2 \geq R\} ) \cup \{ 0 \} \subset \overline{\mathbb{D}_{\varepsilon}}
 \]
is a conformal disk if $R$ is large enough.
Since Sym($M_k$) leaves $\mathcal{D}$ invariant and fixes $0$, the group $\{ S|_{\mathcal{D}
 } \: | \: S \in {\rm{Sym}}(M_k)\}$ is included in $D(k+1)$.
Therefore ${\rm{Sym}}(M_k) = D(k+1)$.
\end{proof}
\section*{Acknowledgements}
The authors would like to thank Professor Francisco J. L\'{o}pez for valuable comments and suggestions.

\bigskip
{\small \noindent Shoichi Fujimori \\ Department of Mathematics \\ Hiroshima University \\ Higashihiroshima, Hiroshima 739-8526, Japan \\ {\itshape E-mail address}\/: \href{mailto:fujimori@hiroshima-u.ac.jp}{fujimori@hiroshima-u.ac.jp}
\par\vskip4ex
\noindent
Shin Kaneda \\
Department of Mathematics \\
Hiroshima University \\
Higashihiroshima, Hiroshima 739-8526, Japan \\ {\itshape E-mail address}\/: \href{mailto:shin-kaneda@hiroshima-u.ac.jp}{shin-kaneda@hiroshima-u.ac.jp}
}

\end{document}